\newtheorem{thm}{Theorem}
\newtheorem{cor}[thm]{Corollary}
\newtheorem{lem}[thm]{Lemma}
\newtheorem{prop}[thm]{Proposition}
\theoremstyle{definition}
\newtheorem{defn}[thm]{Definition}
\theoremstyle{remark}
\newtheorem{eg}[thm]{Example}
\newenvironment{acknowledge}{\noindent\textbf{Acknowledgments.}}{}
\numberwithin{equation}{section}
\numberwithin{thm}{section}
\long\def\blankfootnotetext#1{\begingroup\def\thefootnote{\fnsymbol{footnote}}\footnotetext{#1}\endgroup}
\newcommand{\abs}[1]{\left\vert{#1}\right\vert}
\newcommand{\set}[1]{\left\{{#1}\right\}}
\newcommand{\spn}[1]{\mathrm{span}\set{#1}}
\newcommand{\coord}[1]{\left({#1}\right)}
\newcommand{\Proj}{\mathbb{P}}
\newcommand{\Z}{\mathbb{Z}}
\newcommand{\Q}{\mathbb{Q}}
\newcommand{\R}{\mathbb{R}}
\newcommand{\conv}[1]{\mathrm{conv}\,#1}
\newcommand{\sconv}[1]{\mathrm{conv}\!\set{#1}}
\newcommand{\scone}[1]{\mathrm{cone}\!\set{#1}}
\begin{document}
\title{Canonical toric Fano threefolds}
\author{Alexander M. Kasprzyk}
\address{Department of Mathematics and Statistics\\University of New Brunswick\\Fredericton NB\\E3B 5A3\\Canada.}
\email{kasprzyk@unb.ca}
\maketitle
\blankfootnotetext{2000 \textit{Mathematics Subject Classification}. Primary 14J45; Secondary 14J30, 14M25, 52B20.}
\blankfootnotetext{\textit{Key words and phrases}. Toric, Fano, threefold, canonical singularities, convex polytopes.}
\begin{abstract}
An inductive approach to classifying all toric Fano varieties is given. As an application of this technique, we present a classification of the toric Fano threefolds with at worst canonical singularities. Up to isomorphism, there are $674,\!688$ such varieties.
\end{abstract}
\section{Introduction}
Recall that a normal projective variety $X$ with log terminal singularities such that the anticanonical divisor $-K_X$ is an ample $\Q$-Cartier divisor is said to be \emph{Fano}. A nonsingular Fano surface is usually called a \emph{del Pezzo surface}. Their classification is well known: $\Proj^2$, $\Proj^1\times\Proj^1$, and $\Proj^2$ blown up in at most eight points (in general position). Of these, the first five are toric. Nonsingular Fano threefolds have also been classified. There are seventeen families with Picard number one, and eighty-nine other families~(\cite{Isk79a, Isk79b, MU82, Sok79, Cut89, Tak89, MM04}).

A great deal more can be said concerning nonsingular toric Fano varieties (\cite{Wis01,FS03}). There are eighteen smooth toric Fano threefolds (\cite{Bat81,Baty91,WW82}), and $124$ smooth toric Fano fourfolds (\cite{Bat99,Sat00}). An inductive algorithm for classifying the smooth toric Fano $n$-folds was recently described in~\cite{KN07}. This algorithm requires knowledge of the Gorenstein toric Fano $(n-1)$-folds and, using the data from~\cite{KS00}, allowed the classification of the five-folds. {\O}bro has presented a different algorithm based on the ingenious notion of \emph{special facets}; using this method dimensions six, seven, and eight have now been classified (see~\cite{Obr07}).

In~\cite{KMM92} it was shown that the degree $(-K_X)^n$ of any smooth Fano variety $X$ of dimension $n$ is bounded, as is the number of deformation types. Similar results are not known for Fano varieties in general; but the number of isomorphism classes of toric Fano varieties of fixed dimension and bounded discrepancy is known to be finite (see~\cite{BB92,Bor00}). It thus makes sense to look for complete classifications in the toric setting beyond the smooth cases.

Gorenstein toric Fano varieties have been classified up to dimension four. There are $16$, $4319$, and $473,\!800,\!776$  isomorphism classes in, respectively, dimension two, three, and four (see~\cite{KS97,KS98b,KS00}). These classifications are of particular interest: Gorenstein toric Fano varieties are used to construct mirror pairs of Calabi-Yau varieties (see~\cite{Bat94,BaBo96,KS02}).

One can also attempt to classify those toric Fano varieties with at worst terminal singularities. Every surface of this form is nonsingular, and so the classification reduces to the smooth case above. In three dimensions, the author showed in~\cite{Kas03} that there are (up to isomorphism) $634$ varieties, of which $233$ are $\Q$-factorial and $100$ are Gorenstein.

All the above classifications are subsets of a more general case: toric Fano varieties with at worst canonical singularities. Here the surface case reduces to the Gorenstein case. This paper describes an inductive approach to achieving a classification in higher dimensions. As an application, the classification for threefolds is calculated. There are $674,\!688$ isomorphism classes. As well as encapsulating the three--dimensional classifications mentioned above, it is worth observing that $12,\!190$ of the resulting varieties are $\Q$-factorial (of which the Picard number is bounded by $\rho\leq 7$). The classification is available online via the Graded Rings Database (\cite{Bro07}) at \href{http://malham.kent.ac.uk}{\texttt{http://malham.kent.ac.uk/}}.

The various classifications are summarised in Table~\ref{tab:classifications}.

\begin{table}[htdp]
\begin{center}
\begin{tabular}{|r||c||c|c|c||c|c|c|}
\hline
&&\multicolumn{3}{c||}{Terminal}&\multicolumn{3}{c|}{Canonical}\\
$n$&Smooth&Gorenstein&$\Q$-factorial&Total&Gorenstein&$\Q$-factorial&Total\\
\hline
2&5&5&5&5&16&16&16\\
3&18&100&233&634&4,319&12,190&674,688\\
4&124&&&&473,800,776&&\\
5&866&&&&&&\\
6&7,622&&&&&&\\
7&72,256&&&&&&\\
8&749,892&&&&&&\\
\hline
\end{tabular}
\end{center}
\caption{Known classifications of toric Fano $n$-folds.}
\label{tab:classifications}
\end{table}
\begin{acknowledge}
The author would like to express his gratitude to Dr.~G.~K.~Sankaran for his invaluable explanations and advice. A special acknowledgement is due to Professor Alexander Borisov for making~\cite{BB} available; the current paper was inspired by the ideas developed in that unpublished work. Thanks also to Dr.~Gavin Brown and the IMSAS at the University of Kent for hosting the final classification online in a searchable format, to Michael Kerber for assistance with the web interface, and to an anonymous referee for several useful observations.

A significant portion of this work was funded by an Engineering and Physical Sciences Research Council (EPSRC) studentship, and forms part of the author's PhD thesis~(\cite{KasPhD}). The author is currently funded by an ACEnet Postdoctoral Research Fellowship.
\end{acknowledge}
\section{Fano Polytopes}\label{sec:Fano_polytopes}

A \emph{toric variety} is a normal variety $X$ that contains an algebraic torus as a dense open subset, together with an action of the torus on $X$ which extends the natural action of the torus on itself. For further details see~\cite{Oda78,Dan78,Ful93}. We shall briefly review the properties we need, and in so doing fix our notation.

Let $M\cong\Z^n$ be the lattice of characters of the torus, with dual lattice $N:=\mathrm{Hom}(M,\Z)$. Every toric variety $X$ of dimension $n$ has an associated fan $\Delta$ in $N_\R:=N\otimes_\Z\R$. The converse also holds; to any fan $\Delta$ there is an associated toric variety $X(\Delta)$. Let $\set{\rho_i}_{i\in I}$ be the set of rays of $\Delta$. For each $i\in I$ there exists a unique primitive lattice element of $\rho_i$, which by a traditional abuse of notation we continue to denote $\rho_i$. $X$ is Fano if and only if $\set{\rho_i}_{i\in I}$ correspond to the vertices of a convex polytope in $N_\R$ (see, for example,~\cite{Dan78}).

A normal variety $X$ is \emph{$\Q$-factorial} if every prime divisor $\Gamma\subset X$ has a positive integer multiple
$c\Gamma$ which is a Cartier divisor. Once again, for the toric case there exists a well known description in terms of the fan. The toric variety $X$ is $\Q$-factorial if and only if the fan $\Delta$ is simplicial.

We say that a fan $\Delta$ is \emph{terminal} if each cone $\sigma\in\Delta$ satisfies the following:
\begin{enumerate}
\item The rays $\rho_1,\ldots,\rho_k$ of $\sigma$ are contained in an affine hyperplane $H:(u(v)=1)$ for some $u\in
 M_{\Q}$;
\item\label{cond:hyperplane_terminal} There are no other elements of the lattice $N$ in the part of $\sigma$ under or on $H$ (i.e. $N\cap\sigma\cap
 (u(v)\leq1)=\set{0,\rho_1,\ldots,\rho_k}$).
\end{enumerate}

A toric variety $X$ is \emph{terminal} (i.e. has at worst terminal singularities) if and only if the fan $\Delta$ is terminal. Relaxing condition~\eqref{cond:hyperplane_terminal} slightly to allow lattice points on $H$, one obtains the definition of a \emph{canonical} fan. $X$ has (at worst) canonical singularities if and only if the fan $\Delta$ is canonical~(\cite{Reid83M}).

\begin{defn}
Let $P\subset N_\R$ be a convex lattice polytope containing only the origin as a strictly interior lattice point (i.e.~$P^\circ\cap N=\set{0}$). We call such a polytope \emph{Fano}. If in addition the only boundary lattice points of $P$ are the vertices (i.e.~$\partial P\cap N=\mathrm{vert}\,{P}$) then we call $P$ a \emph{terminal Fano polytope}. Otherwise we call $P$ a \emph{canonical Fano polytope}.
\end{defn}

Clearly there is an equivalence between terminal (resp. canonical) Fano polytopes and toric Fano varieties with at worst terminal (resp. canonical) singularities. Two toric Fano $n$-folds are isomorphic if and only if the corresponding Fano polytopes are unimodular equivalent; i.e. equivalent up to a linear unimodular transformation from $GL(n,\Z)$.

In~\cite{Kas03} a classification of toric Fano threefolds with at worst terminal singularities was given. The method employed relied on an approach first outlined in~\cite{BB}. It depends on the polytopal description of a toric Fano variety, and can be summarised in two steps:
\begin{itemize}
\item[(i)] Classify all the ``minimal'' polytopes;
\item[(ii)] Inductively ``grow'' these minimal polytopes.
\end{itemize}

Let us explain this algorithm in more detail. First we shall define what we mean by minimal:

\begin{defn}\label{defn:canon_min}
Let $P$ be a canonical (resp. terminal) Fano $n$-tope. We say that $P$ is \emph{minimal} if, for all $\rho\in\mathrm{vert}\,P$, the polytope $\conv\!(P\cap N\setminus\set{\rho})$ obtained by subtracting $\rho$ from $P$ is not a canonical (resp. terminal) Fano $n$-tope.
\end{defn}

Notice that in the canonical case we are only required to check that the origin is not contained in the interior of any of the smaller polytopes obtained by subtracting a vertex. Our use of Fano and minimal will often be relative to some obvious subspace. Such occurrences should not cause any confusion. This is a common theme when considering lattice polytopes: for example, when talking about the volume of a face, one usually means the lattice volume of the face in the appropriate sublattice.

\begin{eg}\label{eg:minimal_examples}
Let $P:=\sconv{\pm e_1,\pm e_2}$, where $e_1$ and $e_2$ form a basis for $N$. $P$ is the terminal Fano polygon associated with $\Proj^1\times\Proj^1$. Let $P':=\sconv{\pm e_1}\subset P$. $P'$ is the one-dimensional terminal Fano polytope associated with $\Proj^1$. Both $P$ and $P'$ are examples of minimal Fano polytopes (in two and one dimension respectively).
\end{eg}

Given a Fano polytope $P$ one can enlarge (or ``grow'') it to $P'=\conv\!(P\cup\set{v})$ by the addition of a lattice point $v\in N$, and evaluate whether $P'$ is also a Fano polytope. Clearly, if one starts with the minimal Fano polytopes, one will achieve a complete classification using this technique.

The number of possible lattice points that can be added to $P$ to create a Fano polytope is finite. Assume that $P'$ is Fano, and consider the ray passing through the origin and $-v$. It will intersect $\partial P$ in a point $x$ on some face $F$ not containing $v$. Let $S\subset\mathrm{vert}\,P\cap F$ be of smallest size such that $x\in\conv S$; say $\abs{S}=d$, where $d\leq n$ is as small as possible. Then $\conv\!(S\cup\set{v})$ is a $d$-simplex containing the origin strictly in its (relative) interior. In other words, $\conv\!(S\cup\set{v})$ is a Fano $d$-simplex; there are finitely many of these by, for example,~\cite{BB92,Bor00}.

Thus we have an algorithm for finding all possible Fano polytopes $P'$ which can be obtained from $P$. What we require is a classification of the Fano $d$-simplices, for $d\leq n$ (actually it is sufficient to know the possible weights). Such a classification can be obtained from the techniques in~\cite{BB92} (see also~\cite{Con02,Kas08b}).

What remains to be described is a method for constructing the minimal Fano polytopes. We shall prove an inductive description of these minimal Fano polytopes in Proposition~\ref{prop:decomposition}. It shall be seen that an understanding of these minimal Fano polytopes reduces to an understanding of the Fano $d$-simplices for all $d\leq n$.

Finally, in Section~\ref{sec:min_canonical_threefolds}, we shall find all minimal canonical Fano $3$-topes. A computer can then be used to establish a complete classification of toric Fano threefolds with canonical singularities. The resulting classification is summarised in Section~\ref{sec:class_canonical_threefolds}.
\section{Decomposition of Minimal Fano Polytopes}
The results in this section should be compared with~\cite{KS97}. It should be stressed that the results ignore the lattice point structure of the Fano polytope; only the property that the Fano polytope contains the origin in its interior is relevant.

Let $x_0,\ldots,x_n\in N_\R\cong\R^n$ be such that $P:=\sconv{x_0,\ldots,x_n}$ is an $n$-simplex with $0\in P^\circ$. To this simplex we associate the complete fan $\Delta:=\Delta(P)$ given by the cones over the faces of $P$; i.e. generated by
$$\sigma_i:=\scone{x_0,\ldots,\hat{x}_i,\ldots,x_n},\qquad\text{ where }i=0,\ldots,n.$$
$\hat{x}_i$ indicates that the vertex $x_i$ is omitted.
The following lemma is immediate:
\begin{lem}\label{lem:minus_fan}
With notation as above, let $x\in N_\R$. Then $x\in(-\sigma_i)^\circ$ if and only if
$$P':=\sconv{x_0,\ldots,\hat{x}_i,\ldots,x_n,x}$$
is an $n$-simplex with $0\in{P'}^\circ$, and $\Delta(P')$ is a complete fan.
\end{lem}

We are now in a position to prove the main result of this section:
\begin{prop}\label{prop:decomposition}
Any minimal canonical (resp.~terminal) Fano $n$--tope $P$ is either a simplex, or can be written as $P=\conv\!(S\cup P')$ for some $S$ a minimal canonical (resp.~terminal) Fano $k$--simplex and $P'$ a minimal canonical (resp.~terminal) Fano $(n-k+r)$--tope, where $0\leq r<k<n$, moreover, $\dim(\mathrm\,S\cap\mathrm\,P')\leq r$, and $r$ equals the number of common vertices of $S$ and $P'$.
\end{prop}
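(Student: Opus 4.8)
The plan is to strip away the lattice data and argue entirely with the positive-spanning structure of the vertices, reintroducing the lattice conditions only at the very end. By the remark following Definition~\ref{defn:canon_min}, saying that $P$ is a minimal canonical Fano $n$-tope is the same as saying that $V:=\mathrm{vert}\,P$ satisfies $0\in P^\circ$ while $0\notin\conv(V\setminus\set{\rho})^\circ$ for every $\rho\in V$; in other words $V$ positively spans $N_\R$ and no proper subset does. If $P$ is a simplex there is nothing to prove, so assume $\abs{V}\geq n+2$. I would first choose $S\subseteq V$ to be a smallest subset whose convex hull contains $0$ in its relative interior. Minimality forces $S$ to be affinely independent, hence the vertex set of a $k$-simplex with $0$ in its relative interior, spanning a subspace $L$ of dimension $k$; the equivalence in Lemma~\ref{lem:minus_fan} is the mechanism that recognises when such a collection of rays closes up around the origin. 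Since deleting a vertex of a simplex leaves a facet, $S$ is automatically minimal at the level of vertices, so it is a minimal Fano $k$-simplex (the lattice condition I defer). Minimality of $P$ forces $k<n$: a full-dimensional $S$ would already positively span $N_\R$, contradicting the minimality of the strictly larger set $V$.

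Next I would construct $P'$. Set $W:=V\setminus S$, which is nonempty because $\abs{S}=k+1\leq n<\abs{V}$. If $0$ already lies in the relative interior of $\conv W$ I take $S'=\emptyset$; otherwise I let $S'\subseteq S$ be a smallest set of vertices of $S$ that returns $0$ to the relative interior of $P':=\conv(W\cup S')$, and in either case put $r:=\abs{S'}$. Since $S'$ is chosen minimally each of its elements is genuinely a vertex of $P'$, and as $W$ and $S$ share no vertices, $S'$ is exactly the set of common vertices of $S$ and $P'$; thus $r$ equals the number of common vertices. Because $W\cup S$ spans $N_\R$ we have $\mathrm{span}\,P'+L=N_\R$, and therefore $\dim P'=n-k+\dim(\mathrm{span}\,P'\cap L)$. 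As $S\cap P'\subseteq\mathrm{span}\,P'\cap L$, it remains to prove that this overlap has dimension exactly $r$, which would give both $\dim P'=n-k+r$ and $\dim(S\cap P')\leq r$.

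The bound $r<k$ then follows from the minimality of $P$. The $r$ vertices in $S'$, being a proper subset of a circuit, are linearly independent; were $r$ equal to $k$ they would span $L$, whence $\mathrm{span}\,P'=N_\R$ and $0$ would lie in $\conv(V\setminus\set{s})^\circ$ for the single remaining vertex $s\in S\setminus S'$ — impossible for a minimal $P$. To see that $P'$ is itself minimal I would handle its two kinds of vertices separately: for $\rho\in S'$ the minimality of the restoring set $S'$ makes $\rho$ indispensable, while for $\rho\in W$ I would transport the minimality of $P$ down into the subspace $\mathrm{span}\,P'$. With the Fano-ness and minimality of $S$ already in hand, this assembles the decomposition $P=\conv(S\cup P')$.

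The step I expect to be the main obstacle is identifying the overlap dimension $\dim(\mathrm{span}\,P'\cap L)$ with the number $r$ of common vertices. One inequality is immediate from $S'\subseteq\mathrm{span}\,P'\cap L$, but the reverse requires showing that every direction in which positive combinations of $W$ spill into $L$ is already accounted for by the chosen vertices $S'$; this is exactly where the circuit property of $S$ — that every proper subset of its vertices is linearly independent — must be leveraged, and where a careless choice of $S$ or $S'$ would produce an overlap strictly larger than $r$ and break the dimension count. A second, more technical difficulty is the lattice bookkeeping behind the canonical (and terminal) refinement: I would have to rule out the possibility that a non-vertex boundary lattice point of $S$ or of $P'$ restores the origin to an interior position after a vertex is deleted, using throughout that any such point is itself a lattice point of the minimal polytope $P$.
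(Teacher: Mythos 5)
Your outline follows the same skeleton as the paper's proof (choose a simplex $S$ of vertices of $P$ with $0$ in its relative interior, set $W:=\mathrm{vert}\,P\setminus S$, and restore the origin by adjoining a small set $S'\subseteq S$), but the step you yourself flag as ``the main obstacle'' --- that $\dim(\mathrm{span}\,P'\cap L)=r$, equivalently $\mathrm{span}\,W\cap L\subseteq\mathrm{span}\,S'$ --- is where the entire content of the proposition sits, and you give no argument for it; moreover the tool you point to (the circuit property of $S$) is not the one that makes it work. What is needed is a localization statement, and this is exactly what the paper extracts from Lemma~\ref{lem:minus_fan}: by minimality of $P$, the convex set $(\conv{W})^\circ\cap L$ must avoid the interiors of the negatives $-\sigma_i$ of all maximal cones of $\Delta(S)$ (otherwise the lemma lets you delete the vertex of $S$ opposite $\sigma_i$ while keeping $0$ interior), and convexity then confines $(\conv{W})^\circ\cap L$ to $-\sigma$ for a \emph{single} cone $\sigma\in\Delta(S)$ of dimension $r<k$. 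Taking $S'$ to be the $r$ generators of $\sigma$ then delivers everything at once: $0\in{P'}^\circ$ (the converse direction of Lemma~\ref{lem:minus_fan}), the containment $\mathrm{span}\,W\cap L\subseteq\mathrm{span}\,\sigma$ which gives $\dim P'=n-k+r$, the bound $r<k$, and the fact that $S'\subsetneq S$ --- a fact your own $r<k$ argument silently presupposes, since it needs $S'$ to be a \emph{proper} subset of the circuit in order to be linearly independent. Without this fan argument, your minimal-cardinality $S'$ is not known to absorb all of $\mathrm{span}\,W\cap L$, so the dimension count, the identification of $r$ with the number of common vertices, and the minimality of $P'$ (for which ``transport the minimality of $P$ down into the subspace'' is a placeholder, not a proof) are all left hanging.

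There is also an outright error, not merely an omission, in your opening reduction: minimality in Definition~\ref{defn:canon_min} deletes a vertex from $P\cap N$, not from $\mathrm{vert}\,P$, so it is \emph{not} equivalent to ``no proper subset of $V$ positively spans''; only the forward implication holds. The triangle of $\Proj(1,2,3)$, Figure~\ref{fig:Fano_triangles}(c), with vertices $(-2,1)$, $(1,1)$, $(0,-1)$, is a counterexample to the converse: every simplex is trivially vertex-minimal, yet deleting the vertex $(-2,1)$ from its lattice points leaves $\sconv{(1,1),(-1,1),(-1,0),(0,-1)}$, which still contains the origin in its interior --- which is why Figure~\ref{fig:Fano_triangles} records (c) as non-minimal. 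Consequently your inference that $S$ ``is automatically minimal at the level of vertices, so it is a minimal Fano $k$-simplex'' is a non sequitur: minimality of $S$ (and of $P'$) has to be deduced from the lattice-level minimality of $P$, e.g.\ by showing that if $0$ were interior to $\conv\!((S\cap N)\setminus\set{x_i})$ then splicing that positive combination with one coming from $W$ would place $0$ in the interior of $\conv\!((P\cap N)\setminus\set{x_i})$. This is not deferred bookkeeping; it is the difference between the true statement and a false one.
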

\begin{proof}
We assume that $P$ is not a simplex. Let $x_0,\ldots,x_l$ be the vertices of $P$, where $l>n$. Without loss of generality we may assume that $x_0,\ldots,x_n$ do not lie in a hyperplane and that $0\in\sconv{x_0,\ldots,x_n}$.

Minimality of $P$ ensures that $0\notin\sconv{x_0,\ldots,x_n}^\circ$. Hence the origin must lie on some facet, and we may assume (with a possible reordering) that $0\in\sconv{x_0,\ldots,x_k}^\circ$ for some $k<n$. We obtain the $k$--simplex $S:=\sconv{x_0,\ldots,x_k}$. $S$ is minimal and Fano since $P$ is; if $P$ is terminal then $S$ must be terminal.

Let $P'':=\sconv{x_{k+1},\ldots,x_l}$, so $P=\conv\!(S\cup P'')$. Let $\Gamma$ be the $k$-dimensional subspace of $N_\R$ containing $S$. Since the $x_i$ are vertices we have that $\set{x_0,\ldots,x_k}\cap P''=\emptyset$, and since $P$ is minimal we have that $\set{x_{k+1},\ldots,x_l}\cap\Gamma=\emptyset$. It must also be that ${P''}^\circ\cap\Gamma\neq\emptyset$, otherwise $0$ would lie in a facet of $P$. Let $m:=\dim({P''}^\circ\cap\Gamma)$. A dimension count reveals that $\dim{P''}=n-k+m$.

By minimality of $P$ and Lemma~\ref{lem:minus_fan} we have that ${P''}^\circ\cap\Gamma\subset-\sigma$ for some $r$-dimensional simplicial cone $\sigma\in\Delta(S)$, where $k>r\geq m$. Since $\set{0}$ is the apex of $-\sigma$ we have that either $\set{0}={P''}^\circ\cap\Gamma$ or $0\notin{P''}^\circ\cap\Gamma$. The first case gives us that $P''$ is a minimal Fano $(n-k)$--tope (which is necessarily terminal if $P$ is terminal), so by setting $P'=P''$ we are done. For the second possibility we may assume that $\sigma=\scone{x_{k-r+1},\ldots,x_k}$ and construct the polytope $P':=\sconv{x_{k-r+1},\ldots,x_l}$. By construction $\dim P'=n-k+r$, and by Lemma~\ref{lem:minus_fan} we have that $0\in{P'}^\circ$. Hence $P'$ is our desired minimal Fano $(n-k+r)$--tope.
\end{proof}

From Proposition~\ref{prop:decomposition} we may conclude the following two corollaries, which are well-known results of Steinitz.
\begin{cor}\label{cor:max_minimal_vertices}
Any minimal Fano polytope $P$ has at most $2\dim{P}$ vertices.
\end{cor}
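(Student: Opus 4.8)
The plan is to argue by induction on $n=\dim P$, feeding off the decomposition supplied by Proposition~\ref{prop:decomposition}. For the base case $n=1$ every Fano $1$-tope is a segment, hence has exactly $2=2\cdot1$ vertices, so the bound holds. For the inductive step I would first dispose of the trivial case: if $P$ is an $n$-simplex then it has $n+1$ vertices, and $n+1\leq 2n$ for every $n\geq 1$, so there is nothing to prove.

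It then remains to treat the case where $P$ is not a simplex. Here Proposition~\ref{prop:decomposition} writes $P=\conv\!(S\cup P')$ with $S$ a minimal Fano $k$-simplex and $P'$ a minimal Fano $(n-k+r)$-tope, where $0\leq r<k<n$ and where $r$ is exactly the number of vertices common to $S$ and $P'$. The key observation is that $\mathrm{vert}\,P\subseteq\mathrm{vert}\,S\cup\mathrm{vert}\,P'$, since $P$ is the convex hull of $S\cup P'$ and the extreme points of such a hull lie among the extreme points of the two pieces. Because $\dim P'=n-k+r<n$ (using $r<k$) while $\dim P'=n-k+r\geq 1$ (using $k<n$), the dimension strictly decreases and the inductive hypothesis applies to $P'$, bounding its vertex count by $2(n-k+r)$; meanwhile $S$, being a $k$-simplex, contributes precisely $k+1$ vertices.

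The final step is the bookkeeping. Inclusion--exclusion gives
$$\abs{\mathrm{vert}\,P}\leq\abs{\mathrm{vert}\,S}+\abs{\mathrm{vert}\,P'}-r\leq(k+1)+2(n-k+r)-r=2n-k+1+r,$$
and since $r<k$ forces $r+1\leq k$, the right-hand side is at most $2n$, which is exactly the claimed bound. I expect the only real subtlety — rather than a genuine obstacle — to be verifying that the decomposition keeps us in strictly lower dimension so that the induction is legitimate, and recognising that the single inequality $r<k$ is precisely what makes the vertex count close up at $2n$; everything else is routine counting.
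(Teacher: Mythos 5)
Your proof is correct and follows exactly the route the paper intends: the paper states this corollary as an immediate consequence of Proposition~\ref{prop:decomposition}, and your induction on dimension with the inclusion--exclusion count $(k+1)+2(n-k+r)-r\leq 2n$ (using $r+1\leq k$) is precisely the derivation left implicit there.
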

\begin{cor}\label{cor:max_vertices_centrally_sym}
Let $P$ be a minimal Fano polytope such that $\abs{\mathrm{vert}\,P}=2\dim{P}$. Then $P$ is centrally symmetric.
\end{cor}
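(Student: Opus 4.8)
The plan is to prove Corollary~\ref{cor:max_vertices_centrally_sym} by applying the decomposition from Proposition~\ref{prop:decomposition} iteratively and extracting the extremal case from Corollary~\ref{cor:max_minimal_vertices}. First I would recall that Corollary~\ref{cor:max_minimal_vertices} bounds the vertices of a minimal Fano polytope by $2\dim P$, and this bound is presumably proved by tracking vertex counts through the decomposition $P=\conv(S\cup P')$: the $k$-simplex $S$ contributes $k+1$ vertices, the $(n-k+r)$-tope $P'$ contributes at most $2(n-k+r)$ vertices by induction, and the two pieces share exactly $r$ vertices. The total is therefore at most $(k+1)+2(n-k+r)-r = 2n+1-k+r$. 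Since $r<k$, this is strictly less than $2n+1$, hence at most $2n$, recovering the bound.

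The key observation for central symmetry is that equality $\abs{\mathrm{vert}\,P}=2n$ forces the inequalities in this count to be tight at every stage. Specifically, $2n+1-k+r = 2n$ requires $k-r=1$, i.e.\ $r=k-1$, and it requires $P'$ itself to achieve its own maximum of $2(n-k+r)=2(n-1)$ vertices. So I would set up an induction on $\dim P$: the base case is $\dim P=1$, where the only minimal Fano $1$-tope with two vertices is $\sconv{\pm e_1}$, which is centrally symmetric. For the inductive step, the tightness conditions tell me that $P'$ is a minimal Fano $(n-1)$-tope with $2(n-1)$ vertices, hence centrally symmetric by the inductive hypothesis, and that $S$ is a $k$-simplex sharing $k-1$ vertices with $P'$.

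The hard part will be promoting ``$P'$ centrally symmetric plus a tightly-attached simplex $S$'' to ``$P$ centrally symmetric.'' The geometric content is this: when $r=k-1$, the simplex $S=\sconv{x_0,\ldots,x_k}$ has all but one of its vertices lying in $P'$, and by construction the origin lies in the relative interior of $S$ while the remaining vertex $x_0$ lies in the opposite cone $-\sigma$ where $\sigma$ is spanned by the shared vertices. I would argue that the centrally symmetric structure of $P'$ pairs up its vertices as $\set{v,-v}$, and that the shared $k-1$ vertices together with the single new vertex $x_0$ must, by the minimality and the ``origin in relative interior of $S$'' condition, be arranged so that $x_0$ is forced to equal $-x_k$ (the negative of the unshared simplex vertex among $x_1,\dots,x_k$); otherwise one could subtract a vertex and retain the origin in the interior, contradicting minimality. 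This closes the pairing and shows every vertex of $P$ has its negative present.

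I expect the genuine obstacle to be verifying that the new vertex completes the symmetric pairing exactly, rather than merely landing somewhere in the correct cone. The decomposition guarantees $x_0\in(-\sigma)^\circ$, but central symmetry demands the sharper statement that $x_0$ is the antipode of a specific existing vertex. I anticipate using minimality in the strong form: if $x_0\neq -v$ for every vertex $v$, then removing an appropriate vertex of $S$ leaves the origin still interior (since $P'$ alone already surrounds the origin by its symmetry), contradicting Definition~\ref{defn:canon_min}. Once the pairing is forced, central symmetry of $P=\conv(S\cup P')$ follows because $\mathrm{vert}\,P=\mathrm{vert}\,P'\cup\set{x_0}$ with $x_0=-x_k$ and all other vertices already symmetric.
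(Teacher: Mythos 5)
Your counting skeleton is right, and it is surely what the paper intends (the paper gives no proof at all, merely asserting that both corollaries follow from Proposition~\ref{prop:decomposition} and are classical results of Steinitz): tightness in $\abs{\mathrm{vert}\,P}\leq(k+1)+2(n-k+r)-r=2n+1-k+r$ forces $r=k-1$ and forces $P'$ to be a minimal Fano $(n-1)$-tope with $2(n-1)$ vertices, hence centrally symmetric by induction, leaving exactly \emph{two} vertices of $S$, say $x_0$ and $x_k$, outside $P'$ (not ``all but one''; correspondingly $\mathrm{vert}\,P=\mathrm{vert}\,P'\cup\set{x_0,x_k}$, whereas your closing line $\mathrm{vert}\,P=\mathrm{vert}\,P'\cup\set{x_0}$ would give only $2n-1$ vertices). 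The genuine gap is the pairing step. Your mechanism --- if $x_0\neq -v$ for every vertex $v$, then ``removing an appropriate vertex of $S$ leaves the origin still interior (since $P'$ alone already surrounds the origin by its symmetry)'' --- fails: $P'$ spans only a hyperplane $V\subset N_\R$, so it never surrounds the origin in $N_\R$. Since $x_0,x_k$ are the only vertices of $P$ off $V$, and they lie on strictly opposite sides of $V$ (because $0\in P^\circ$), deleting either of them \emph{always} expels the origin from the interior, so no contradiction arises there. The vertices whose deletion minimality really constrains are those of $P'$: writing $\mathrm{vert}\,P'=\set{\pm f_1,\ldots,\pm f_{n-1}}$ and letting $p^*$ be the point where the segment joining $x_0$ to $x_k$ crosses $V$, one checks that $0$ lies in the interior of $\conv(\mathrm{vert}\,P\setminus\set{f_i})$ if and only if the coefficient of $f_i$ in $p^*$ is positive (symmetrically for $-f_i$), so minimality of $P$ forces $p^*=0$. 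This slice computation is the actual content of the corollary and is absent from your proposal.

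Even granting $p^*=0$, two further points you never address are essential. First, $p^*=0$ says only that $x_k=-cx_0$ for some $c>0$, i.e.\ the two new vertices lie on opposite rays; to get $c=1$ you must use the lattice: vertices of a Fano polytope are primitive (if $v=cw$ with $w\in N$ and $c\geq 2$, then $w$ would be a second interior lattice point), and two primitive lattice points on opposite rays are exact negatives. This is unavoidable because the lattice-free statement is false: the configuration $\sconv{\coord{1,0},\coord{-2,0},\coord{0,1},\coord{0,-1}}$ is minimal in the sense that deleting any vertex expels the origin from the interior, has $2\cdot 2$ vertices, and is not centrally symmetric; it is excluded only because $\coord{-1,0}$ is an interior lattice point, so it is not Fano. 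Your inductive step invokes only minimality and convexity, so it cannot distinguish this configuration and cannot be correct as stated. Second, when $k\geq 2$ the equation $x_0=-x_k$ you are aiming for is \emph{impossible}: it would place the origin on the edge of $S$ joining $x_0$ and $x_k$, contradicting $0\in S^\circ$ (relative interior), which the decomposition guarantees. So for $k\geq 2$ the correct outcome of the argument is a contradiction --- that case cannot occur --- after which $k=1$, $S$ is the minimal Fano $1$-simplex $\sconv{e,-e}$, and the symmetry closes up; your write-up instead treats $x_0=-x_k$ as the uniform happy ending, conflating the case that must be eliminated with the case that survives.
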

For $k>1$, no $k$--simplex is centrally symmetric. Hence Corollary~\ref{cor:max_vertices_centrally_sym} is actually an ``if and only if''.

A characterisation of centrally symmetric simplicial \emph{reflexive} Fano polytopes is given in~\cite{Nill07}. These polytopes can always be embedded in the $n$-cube $\sconv{\pm e_1\pm\ldots\pm e_n}$.
\section{Minimal Canonical Fano Threefolds}\label{sec:min_canonical_threefolds}
For the convenience of the reader we begin by summarising the main results of this section in the following theorem (see also Tables~\ref{tab:minimal_tet} and~\ref{tab:minimal_canonical_dim3}):
\begin{thm}
There are $26$ minimum Fano polytopes in dimension three, up to the action of $GL(3,\Z)$. Of these sixteen are tetrahedra. 
\end{thm}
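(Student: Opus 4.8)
The plan is to enumerate the minimal canonical Fano 3-topes by combining the decomposition in Proposition~\ref{prop:decomposition} with the bound from Corollary~\ref{cor:max_minimal_vertices}. Since any minimal Fano polytope has at most $2\dim P$ vertices, a minimal canonical Fano $3$-tope has between $4$ and $6$ vertices. The case of $4$ vertices is precisely the case of tetrahedra (simplices), and the claim is that sixteen of these arise. The cases of $5$ and $6$ vertices are exactly the non-simplex case of Proposition~\ref{prop:decomposition}, where $P=\conv(S\cup P')$ decomposes into a lower-dimensional minimal simplex $S$ glued to a lower-dimensional minimal tope $P'$. So I would split the proof into these two regimes.

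\medskip

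\emph{Simplices (tetrahedra).} Here I would reduce to a classification of Fano $3$-simplices that are minimal in the canonical sense. As noted after Definition~\ref{defn:canon_min}, minimality in the canonical case only requires that deleting any vertex destroys the property that $0$ lies in the interior; combined with the fact (cited from~\cite{BB92,Bor00}) that Fano $d$-simplices are classified by their \emph{weights}, this becomes a finite combinatorial problem. A Fano $3$-simplex $\sconv{x_0,x_1,x_2,x_3}$ with $0\in P^\circ$ corresponds to a weight system $(w_0,w_1,w_2,w_3)$ of positive integers with $\sum w_i x_i = 0$; the condition $P^\circ\cap N=\set{0}$ (canonical Fano) and the weight-reduction technique of~\cite{BB92} should yield a finite list, from which I extract exactly those that are minimal. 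I expect the count $16$ to fall out of this enumeration, and the sixteen should include the reflexive tetrahedra as well as some strictly canonical (non-Gorenstein) ones.

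\medskip

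\emph{Non-simplices ($5$ and $6$ vertices).} For these I would apply Proposition~\ref{prop:decomposition} with $n=3$. The constraints $0\leq r<k<n=3$ force $k\in\set{1,2}$ with $r<k$, so the possible $(k,r)$ are $(1,0)$, $(2,0)$, and $(2,1)$. In each case $S$ is a minimal canonical Fano $k$-simplex (so $S$ is one of the few minimal Fano segments or triangles, already understood from the simplex analysis) and $P'$ is a minimal canonical Fano $(n-k+r)$-tope of strictly smaller or equal dimension that has itself already been classified. Concretely, $(1,0)$ glues a segment to a minimal $2$-tope transversally, $(2,0)$ glues a triangle to a segment, and $(2,1)$ glues a triangle to a triangle sharing one vertex. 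I would run through each gluing, using the dimension and common-vertex data ($\dim(S\cap P')\leq r$, with $r$ the number of shared vertices) to list the finitely many unimodular types, then discard duplicates under $GL(3,\Z)$ and non-minimal outcomes. The surviving polytopes, together with the sixteen tetrahedra, should total $26$.

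\medskip

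The main obstacle I anticipate is the bookkeeping in the non-simplex case: eliminating unimodular duplicates and verifying minimality of each glued polytope is delicate, since a polytope built from two minimal pieces need not itself be minimal, and two different gluings can produce the same polytope. Ensuring the list is both complete and irredundant is where the real work lies, and this is exactly the point at which the computer-assisted verification alluded to in the introduction becomes essential; the human part of the argument is to reduce the infinite search to the finitely many $(k,r)$-gluings above, after which a finite check closes the count at $26$, with $16$ tetrahedra.
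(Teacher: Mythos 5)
Your overall architecture---bound the number of vertices by Corollary~\ref{cor:max_minimal_vertices}, treat the tetrahedra separately, and apply Proposition~\ref{prop:decomposition} to the non-simplices---is the same as the paper's, but the proposal has two genuine gaps, both at the point where the actual enumeration has to happen.

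First, in the non-simplex case, reducing to the gluing types $(k,r)\in\set{(1,0),(2,1)}$ (your case $(2,0)$ is redundant: with $k$ chosen minimal, a triangle glued to a disjoint segment is the same decomposition as a segment glued to a triangle) does \emph{not} reduce the problem to a finite check. The decomposition fixes which abstract pieces $S$ and $P'$ occur, but not their relative position in the lattice: for instance, gluing a segment $\set{x,-x}$ to the triangle $\sconv{e_1,e_2,-2e_1-e_2}$ leaves $x=\coord{a,b,c}$ ranging over infinitely many lattice points a priori. The heart of the classification---the paper's Lemmas~\ref{lem:3and3}--\ref{lem:1and1}---is precisely the derivation, from minimality and the canonical condition, of explicit inequalities (by forcing points such as $e_3$, $-e_3$, $-e_1-e_3$, $-e_1-e_2-e_3$ to lie outside $P$) that pin $x$ down to one or two solutions. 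You defer exactly this step to ``computer-assisted verification,'' but a computer cannot search an infinite family; and in the paper the computer enters only \emph{after} this theorem, to grow the $26$ minimal polytopes into the full list of $674,\!688$ varieties. The classification of the minimal polytopes themselves is a by-hand argument, and it is the content of this theorem.

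Second, in the tetrahedron case, weights do not determine a Fano simplex up to $GL(3,\Z)$: the same weight system can be realized on different sublattices (fake weighted projective spaces). Indeed, three of the sixteen minimal tetrahedra in the paper share the weights $(1,1,1,1)$, and there are only $14$ admissible weight systems for $16$ tetrahedra, so an enumeration in which ``one weight system = one tetrahedron'' miscounts. The paper resolves this with Proposition~\ref{prop:generating_fan_unique} together with an analysis of when the vertices generate $N$, and the restriction to the admissible weights (Proposition~\ref{prop:min_canonical_tet_weights}) is itself a nontrivial case analysis against the classified Fano triangles---it does not simply ``fall out'' of the finiteness results of~\cite{BB92,Bor00}, which guarantee a finite list but do not produce the minimal ones.
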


First we shall describing which of the Fano tetrahedra are minimal. We do this by restricting the possible weights which may occur.

\begin{figure}[htbp]
\centering
\subfigure[]{\includegraphics[scale=.8]{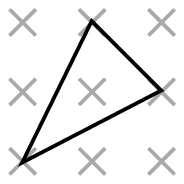}}
\hspace{.2in}
\subfigure[]{\includegraphics[scale=.8]{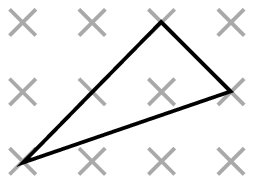}}
\hspace{.2in}
\subfigure[]{\includegraphics[scale=.8]{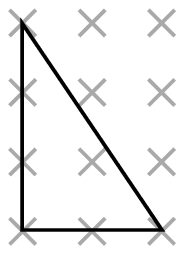}}
\hspace{.2in}
\subfigure[]{\includegraphics[scale=.8]{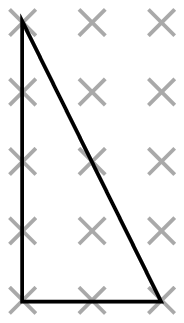}}
\hspace{.2in}
\subfigure[]{\includegraphics[scale=.8]{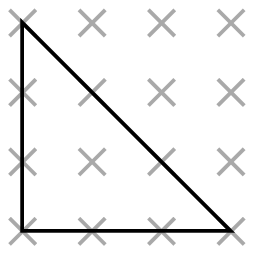}}
\caption{The five Fano triangles, with weights $(1,1,1)$, $(1,1,2)$, $(1,2,3)$, $(1,1,2)$, and $(1,1,1)$ respectively. Only (a) and (b) are minimal.}
\label{fig:Fano_triangles}
\end{figure}

\begin{defn}
Let $S$ be a Fano tetrahedron. We say that $S$ has \emph{weights} $(\lambda_0,\lambda_1,\lambda_2,\lambda_3)\in\Z_{>0}^4$ if:
$$\lambda_0x_0+\lambda_1x_2+\lambda_2x_2+\lambda_3x_3=0,$$
where the $x_i$ are the vertices of $S$, labelled in some order.
\end{defn}

Weights are unique up to reordering and scalar multiplication. It is useful to normalise them by insisting that $\lambda_0\leq\lambda_1\leq\lambda_2\leq\lambda_3$ and that $\gcd\!\set{\lambda_0,\lambda_1,\lambda_2,\lambda_3}=1$.

Before we continue, we need to be familiar with the Fano triangles. The Fano polytopes are well documented in the literature, more often than not appearing alongside an original method of proof. Consult, for example,~\cite{KS97, Sat00, PR00, Nill05}. The triangles are illustrated in Figure~\ref{fig:Fano_triangles}.

\begin{prop}\label{prop:min_canonical_tet_weights}
Let $P$ be a minimal Fano tetrahedron. The possible weights for $P$ are:
\begin{center}
\begin{tabular}{lllll}
$(1,1,1,1)$,&$(1,1,1,2)$,&$(1,1,1,3)$,&$(1,1,2,2)$,&$(1,1,2,3)$,\\
$(1,1,2,4)$,&$(1,1,3,4)$,&$(1,1,3,5)$,&$(1,1,4,6)$,&$(1,2,3,5)$,\\
$(1,3,4,5)$,&$(2,2,3,5)$,&$(2,3,5,7)$,&\multicolumn{2}{l}{or $(3,4,5,7)$.}
\end{tabular}
\end{center}
\end{prop}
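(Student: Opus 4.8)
The plan is to convert minimality into explicit arithmetic constraints on the normalised weights $(\lambda_0,\lambda_1,\lambda_2,\lambda_3)$ and then to reduce to a finite search. Write $\Lambda=\lambda_0+\lambda_1+\lambda_2+\lambda_3$. Since $0\in P^\circ$, the weight relation $\sum_i\lambda_i x_i=0$ exhibits the origin as the weighted barycentre, with ordinary barycentric coordinates $\lambda_i/\Lambda$.

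First I would reformulate minimality using Lemma~\ref{lem:minus_fan}. Deleting the vertex $x_i$ destroys the property $0\in(\cdot)^\circ$ precisely when no lattice point of $P$ other than $x_i$ lies in the open cone $-\sigma_i$, where $\sigma_i=\scone{x_0,\ldots,\hat{x}_i,\ldots,x_3}$. Writing a point of $P$ in barycentric form $y=\sum_k\alpha_k x_k$ with $\alpha_k\ge0$ and $\sum_k\alpha_k=1$, substituting $x_i=-\lambda_i^{-1}\sum_{k\ne i}\lambda_k x_k$ gives coefficients $\alpha_k-\alpha_i\lambda_k/\lambda_i$ in the basis $\{x_k\}_{k\ne i}$, so $y\in(-\sigma_i)^\circ$ if and only if $\alpha_i/\lambda_i>\alpha_k/\lambda_k$ for every $k\ne i$. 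Thus $P$ is minimal exactly when, for each $i$, the vertex $x_i$ is the unique lattice point of $P$ that strictly maximises the weight-normalised barycentric coordinate in slot $i$.

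Then I would extract concrete inequalities by feeding this criterion the most dangerous lattice points. The cheapest are the edge points: if $\ell_{ij}$ is the lattice length of the edge $x_ix_j$, the lattice point adjacent to $x_i$ on that edge has $(\alpha_i,\alpha_j)=(1-\ell_{ij}^{-1},\ell_{ij}^{-1})$, and the criterion shows it violates minimality (when $x_i$ is deleted) unless $\ell_{ij}-1\le\lambda_i/\lambda_j$. Applying this at both ends of every edge forces $\ell_{ij}-1\le\min(\lambda_i,\lambda_j)/\max(\lambda_i,\lambda_j)$, so every edge joining vertices of unequal weight is primitive and every edge joining vertices of equal weight has length at most $2$. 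These rigidity statements, together with the Fano condition that $0$ is the only interior lattice point (equivalently, via the associated possibly fake weighted projective space $\Proj(\lambda_0,\lambda_1,\lambda_2,\lambda_3)$, the Reid--Tai inequalities for its cyclic quotient singularities), bound all the weights and cut the possibilities down to a finite, explicitly computable list. Checking each surviving system—either producing a minimal Fano realisation or exhibiting a violating lattice point from the barycentric criterion—then isolates the fourteen systems claimed.

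I expect the main obstacle to be that minimality is a priori a property of the lattice simplex, not of the weights alone: two Fano tetrahedra with the same weights can differ by a fake (finite abelian quotient) structure that changes which lattice points sit inside $P$. The crux is therefore to show that for every weight system outside the stated list a minimality-violating lattice point is forced in \emph{any} Fano realisation, while controlling the finitely many fake structures compatible with a listed system. Separating the genuine weighted projective spaces, where Reid--Tai together with the edge inequalities suffices, from the quotients, where one must track the extra lattice points contributed by the quotient group, is the delicate part; the remaining enumeration over the bounded weight region is routine and well suited to machine verification.
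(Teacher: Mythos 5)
Your reformulation of minimality via Lemma~\ref{lem:minus_fan} is sound in the direction you actually use for necessity (a lattice point of $P$ lying in $(-\sigma_i)^\circ$ certifies that deleting $x_i$ leaves a Fano polytope), and the edge consequences you extract are correct: every edge joining vertices of unequal weight is primitive, and an edge joining equal-weight vertices has lattice length at most $2$. This is also exactly where the paper's proof starts. But your argument stops where the real work begins. The constraints you actually derive --- edge rigidity plus the canonical condition (equivalently, Reid--Tai for the associated possibly fake weighted projective space) --- do not cut the possibilities down to the fourteen listed systems: the canonical condition alone admits $104$ weight systems realised by $225$ tetrahedra (Table~\ref{tab:all_tet_weights}), and nothing in the proposal explains how the survivors are isolated beyond an appeal to ``checking each surviving system'' by machine. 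The missing mechanism is the analysis of minimality-violating points in the \emph{interiors of two-dimensional faces}, not just on edges: the paper shows that any non-vertex boundary lattice point must lie in $-\sigma$ for a cone $\sigma$ of the face fan with $\dim\sigma\leq 2$, and then each such configuration produces a sub-Fano-triangle (e.g.\ $\sconv{x',x_0,x_1}$) which, by the classification of the five Fano triangles in Figure~\ref{fig:Fano_triangles}, forces an explicit linear relation among the vertices and hence a specific weight system. You identify the fake-structure problem as ``the delicate part'' but do not resolve it; as written, the proposal is a plan for a finite search rather than a proof, and the entire content of the proposition is deferred to that unexecuted search.

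A secondary flaw: you assert the barycentric criterion as an equivalence (``$P$ is minimal exactly when\ldots''), but your substitution proves only one implication. The converse --- that absence of a single lattice point in $(-\sigma_i)^\circ$ implies deleting $x_i$ destroys the Fano property --- is precisely what you would invoke when ``producing a minimal Fano realisation'' of each listed system, and it is not automatic: after deleting $x_i$, the origin can lie in the interior of $\conv\!(P\cap N\setminus\set{x_i})$ because several lattice points, none of them individually in the open cone $(-\sigma_i)^\circ$, jointly positively span $N_\R$. So minimality of a candidate tetrahedron must be checked directly against Definition~\ref{defn:canon_min} (all remaining lattice points at once), not via your criterion. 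This does not affect the necessity direction of your argument, but it undermines the correctness of the final verification step as you describe it.
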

\begin{proof}
If $P$ is terminal then the possible weights are listed in~\cite[Proposition 1.8]{Kas03}; they are $(1,1,1,1)$, $(1,1,1,2)$, $(1,1,2,3)$, $(1,2,3,5)$, $(1,3,4,5)$, $(2,3,5,7)$, and $(3,4,5,7)$. 

Suppose that $P=\sconv{x_0,x_1,x_2,x_3}$ is not terminal. Minimality dictates that no edge of $P$ can contain more than one interior lattice point. Let $x\in\partial P\cap N\setminus\mathrm{vert}\,P$. Since the fan $\Delta$ of $P$ is complete, so $x\in-\sigma$ for some cone $\sigma\in\Delta$ of smallest possible dimension. In particular $\dim\sigma\leq 2$, otherwise $P$ is not minimal, hence $\sigma\preccurlyeq\scone{x_0,x_1}$ without loss of generality. Because of minimality we may suppose that any non-vertex lattice point in $\sconv{x_1,x_2,x_3}$ is contained in $-\scone{x_0,x_1}$.

\noindent\underline{$\dim\sigma=1$:}

Let $x=-x_0$, where $x$ is in the interior of the face $\sconv{x_1,x_2,x_3}$, and the line segment $x_1,x$ is lattice point free. There are two possibilities: either there is a second non-vertex lattice point in the face, or there isn't.
\begin{enumerate}
\item
If $x$ is the only non-vertex lattice point in the face then we may regard $\sconv{x_1,x_2,x_3}$ as the Fano triangle (a) in Figure~\ref{fig:Fano_triangles}, with $x$ playing the role of the origin. Hence:
$$\frac{1}{3}(x_1+x_2+x_3)=-x_0,$$
and we obtain weights $(1,1,1,3)$.
\item
Suppose that there exists a second non-vertex lattice point $x'\in\conv\set{x_1,x_2,x_3}$. Then $\sconv{x',x_0,x_1}$ is a Fano triangle with $x$ on the edge joining $x_1$ and $x'$. We may choose $x'$ to be as far from $x_1$ as possible.
\begin{enumerate}
\item
$x'$ lies on the edge joining $x_2$ and $x_3$. In which case, $x'=(1/2)(x_2+x_3)$. There are only two possible Fano triangles: (b) and (c) in Figure~\ref{fig:Fano_triangles}. The former gives:
$$2x_0+x_1+\frac{1}{2}(x_2+x_3)=0,$$
and hence $P$ has weights $(1,1,2,4)$. The latter gives:
$$3x_0+2x_1+\frac{1}{2}(x_2+x_3)=0,$$
yielding weights $(1,1,4,6)$.
\item
$x'$ does not lie on the edge joining $x_2$ and $x_3$. There are no lattice points on the line segment between $x_0$ and $x'$, hence the Fano triangle $\sconv{x',x_0,x_1}$ can only be (b) (observe that (c) is impossible since there are no lattice points between $x_1$ and $x=-x_0$), and so $x'=-2x_0-x_1$. In particular, $x'$ is the only lattice point in the triangle $\sconv{x,x_2,x_3}$, hence:
$$\frac{1}{3}(x_2+x_3-x_0)=-2x_0-x_1.$$
This gives weights $(1,1,3,5)$.
\end{enumerate}
\end{enumerate}

\noindent\underline{$\dim\sigma=2$:}

We have that $\sigma=\sconv{x_0,x_1}$ and may assume that $-x_0$ and $-x_1$ are not a lattice points in the polytope, otherwise we can reduce to the previous case. Let us choose $x$ to be as far from $x_1$ as is possible. Furthermore, minimality gives that any non-vertex lattice point in $\sconv{x_1,x_2,x_3}$ must be contained in $-\scone{x_0,x_1}$.
\begin{enumerate}
\item
Suppose that $x$ lies on that edge joining $x_2$ and $x_3$. Then $x=(1/2)(x_2+x_3)$.

In this case, since the edge joining $x_0$ and $x_1$ contains at most one interior lattice point, the Fano triangle $\sconv{x,x_0,x_1}$ must be equivalent to (a), (b), or (c) from Figure~\ref{fig:Fano_triangles} (note that (d) is impossible, since $-x_0$ or $-x_1$ would be lattice points in the polytope). (a) gives equation $x+x_0+x_1=0$, yielding weights $(1,1,2,2)$. For (b) we obtain $2x+x_0+x_1=0$, giving weights $(1,1,1,1)$.

Finally we consider (c). Notice that $-x_0$ is not in the face by assumption, hence either $x+2x_0+3x_1=0$ or $2x+x_0+3x_1=0$. The second possibility gives us the lattice points $-x_1$ and $x_0+x_1+x$ on the face $\sconv{x_0,x_2,x_3}$, where the second point is closer to $x_0$ than the first. This contradicts minimality. Hence the only possibility is $(1,1,4,6)$.
\item
If $x$ does not lie on the edge joining $x_2$ and $x_3$ then $x$ is, say, in the interior of $\sconv{x_1,x_2,x_3}$, and the only possible Fano triangles for $\sconv{x,x_0,x_1}$ are (a), (b), and (c) (since the edge joining $x_0$ and $x$ must be lattice point free). (a) tells us that $x$ is the only non-vertex lattice point in the face $\sconv{x_1,x_2,x_3}$, so we obtain:
$$\frac{1}{3}(x_1+x_2+x_3)=-x_0-x_1.$$
This gives weights $(1,1,3,4)$.

Since $-x_0$ is not in the face, (b) gives us that the face has only one non-vertex lattice point. Hence:
$$\frac{1}{3}(x_1+x_2+x_3)=-\frac{1}{2}(x_0+x_1),$$
yielding weights $(2,2,3,5)$.

Possibility (c) contradicts the assumption that $-x_0$ and $-x_1$ are not in the polytope.
\end{enumerate}
\end{proof}
\begin{table}[htdp]
\centering
\begin{tabular}{|c|c|c|c|}
\hline
$(1,1,1,1)$&$(1,1,1,1)$&$(1,1,1,1)$&$(1,1,1,2)$\\
\hline
$\begin{pmatrix}-1&1&0&0\\-1&0&1&0\\-1&0&0&1\\\end{pmatrix}$&
$\begin{pmatrix}-2&2&0&0\\-2&1&1&0\\-1&0&0&1\\\end{pmatrix}$&
$\begin{pmatrix}-5&5&0&0\\-3&2&1&0\\-2&1&0&1\\\end{pmatrix}$&
$\begin{pmatrix}-1&1&0&0\\-1&0&1&0\\-2&0&0&1\\\end{pmatrix}$\\
\hline\hline
$(1,1,1,3)$&$(1,1,2,2)$&$(1,1,2,3)$&$(1,1,2,4)$\\
\hline
$\begin{pmatrix}-1&1&0&0\\-1&0&1&0\\-3&0&0&1\\\end{pmatrix}$&
$\begin{pmatrix}-1&1&0&0\\-2&0&1&0\\-2&0&0&1\\\end{pmatrix}$&
$\begin{pmatrix}-1&1&0&0\\-2&0&1&0\\-3&0&0&1\\\end{pmatrix}$&
$\begin{pmatrix}-1&1&0&0\\-2&0&1&0\\-4&0&0&1\\\end{pmatrix}$\\
\hline\hline
$(1,1,3,4)$&$(1,1,3,5)$&$(1,1,4,6)$&$(1,2,3,5)$\\
\hline
$\begin{pmatrix}-1&1&0&0\\-3&0&1&0\\-4&0&0&1\\\end{pmatrix}$&
$\begin{pmatrix}-1&1&0&0\\-3&0&1&0\\-5&0&0&1\\\end{pmatrix}$&
$\begin{pmatrix}-1&1&0&0\\-4&0&1&0\\-6&0&0&1\\\end{pmatrix}$&
$\begin{pmatrix}-2&1&0&0\\-3&0&1&0\\-5&0&0&1\\\end{pmatrix}$\\
\hline\hline
$(1,3,4,5)$&$(2,2,3,5)$&$(2,3,5,7)$&$(3,4,5,7)$\\
\hline
$\begin{pmatrix}-3&1&0&0\\-4&0&1&0\\-5&0&0&1\\\end{pmatrix}$&
$\begin{pmatrix}-1&1&0&0\\-3&0&2&0\\-4&0&1&1\\\end{pmatrix}$&
$\begin{pmatrix}-3&2&0&0\\-4&1&1&0\\-5&1&0&1\\\end{pmatrix}$&
$\begin{pmatrix}-4&3&0&0\\-3&1&1&0\\-5&2&0&1\\\end{pmatrix}$\\
\hline
\end{tabular}
\caption{The sixteen minimal canonical Fano tetrahedra.}
\label{tab:minimal_tet}
\end{table}
Knowing the weights, we can find the associated tetrahedra. We shall require the following result:
\begin{prop}[\protect{\cite[Proposition~2]{BB92}}]\label{prop:generating_fan_unique}
For any weights $(\lambda_0,\lambda_1,\ldots,\lambda_n)$ such that $\gcd\!\set{\lambda_0,\lambda_1,\ldots,\lambda_n}=1$,  let $\rho_0,\rho_1,\ldots,\rho_n\in N$ be the primitive generators for the fan of $\Proj(\lambda_0,\lambda_1,\ldots,\lambda_n)$. Then:
\begin{itemize}
\item[(i)] $\lambda_0\rho_0+\lambda_1\rho_1+\ldots+\lambda_n\rho_n=0$;
\item[(ii)] The $\rho_i$ generate the lattice $N$.
\end{itemize}

Furthermore, if $\rho'_0,\rho'_1,\ldots,\rho'_n$ is any set of primitive lattice elements satisfying (i) and (ii) then there exists a transformation in $GL(n,\Z)$ sending $\rho_i$ to $\rho'_i$ for $i=0,1,\ldots,n$.
\end{prop}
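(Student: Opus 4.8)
The plan is to realise $\Proj(\lambda_0,\ldots,\lambda_n)$ through its standard toric (quotient) description and then read off (i) and (ii), reserving the bulk of the work for the uniqueness clause. Write $\lambda:=(\lambda_0,\ldots,\lambda_n)\in\Z^{n+1}$ and let $\tilde N:=\Z^{n+1}$ with standard basis $\tilde e_0,\ldots,\tilde e_n$. Since $\gcd\set{\lambda_0,\ldots,\lambda_n}=1$ the vector $\lambda$ is primitive, so the quotient $N:=\tilde N/\Z\lambda$ is torsion-free of rank $n$ and the projection $\pi\colon\tilde N\to N$ is the cokernel of the map $1\mapsto\lambda$. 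The fan of $\Proj(\lambda_0,\ldots,\lambda_n)$ is exactly the image under $\pi$ of the faces of the positive orthant of $\tilde N$, with rays $\R_{\geq0}\rho_i$ for $\rho_i:=\pi(\tilde e_i)$. Granting this standard identification, (i) is immediate since $\sum_i\lambda_i\rho_i=\pi\coord{\sum_i\lambda_i\tilde e_i}=\pi(\lambda)=0$; and (ii) is immediate too, because the $\tilde e_i$ generate $\tilde N$ and $\pi$ is surjective, so their images generate $N$.

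For the uniqueness statement, which I regard as the real content, suppose $\rho_0',\ldots,\rho_n'\in N$ are primitive lattice points also satisfying (i) and (ii). Consider the two homomorphisms $\phi,\phi'\colon\tilde N\to N$ determined by $\tilde e_i\mapsto\rho_i$ and $\tilde e_i\mapsto\rho_i'$. Both are surjective by (ii), so each kernel is a rank-one subgroup; moreover the respective cokernels are isomorphic to $N$, hence torsion-free, so each kernel is saturated and thus generated by a single primitive vector. Now (i) says $\lambda$ lies in each kernel, and $\lambda$ is primitive, so a saturated rank-one subgroup containing $\lambda$ can only be $\Z\lambda$ itself. Hence $\ker\phi=\ker\phi'=\Z\lambda$.

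Consequently both $\phi$ and $\phi'$ factor as isomorphisms $\bar\phi,\bar\phi'\colon\tilde N/\Z\lambda\xrightarrow{\ \sim\ }N$, and the composite $g:=\bar\phi'\circ\bar\phi^{-1}\in GL(n,\Z)$ satisfies $g\rho_i=\bar\phi'\coord{\bar\phi^{-1}(\rho_i)}=\bar\phi'(\tilde e_i+\Z\lambda)=\rho_i'$ for every $i$, which is exactly the claimed transformation. It is worth noting that this half of the argument never uses primitivity of the $\rho_i$: it needs only relation (i), the spanning property (ii), and the primitivity of $\lambda$.

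The step I expect to carry the weight is the kernel computation $\ker\phi=\Z\lambda$; everything downstream is formal once one knows that the single relation (i), combined with (ii), pins the kernel down to exactly $\Z\lambda$ rather than to a strictly larger rank-one subgroup (which is ruled out by the saturation forced by (ii) together with the primitivity of $\lambda$). The one genuinely delicate point on the existence side is the primitivity of $\rho_i=\pi(\tilde e_i)$, which is what makes the $\rho_i$ the honest primitive ray generators and forces the coefficients in (i) to be the $\lambda_i$ themselves; this holds precisely when the weights are well-formed, equivalently when $\gcd\set{\lambda_j:j\neq i}=1$ for each $i$. I would therefore either adopt this normalisation at the outset, or record the standard fact that any weight system with $\gcd\set{\lambda_0,\ldots,\lambda_n}=1$ reduces to a well-formed one without changing $\Proj(\lambda_0,\ldots,\lambda_n)$.
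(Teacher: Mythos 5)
The first thing to say is that the paper offers no proof of this proposition at all: it is imported verbatim from \cite[Proposition~2]{BB92}, so there is no internal argument to compare yours against. Judged on its own, your proof is correct, and it is the natural lattice-theoretic argument. The existence half via the quotient $N=\tilde N/\Z\lambda$ is fine, and the uniqueness half is complete: surjectivity of $\phi$ onto the torsion-free lattice $N$ forces its rank-one kernel to be saturated, a saturated rank-one subgroup containing the primitive vector $\lambda$ can only be $\Z\lambda$, and then both maps descend to isomorphisms $\tilde N/\Z\lambda\to N$ whose composite is the required transformation. Two cosmetic points only: the groups you call ``cokernels'' are really the coimages $\tilde N/\ker\phi$ (the cokernel of a surjection is trivial), and $g$ is an automorphism of $N$, hence an element of $GL(n,\Z)$ only after a choice of basis; neither affects the logic. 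Your closing remark that the uniqueness half never uses primitivity of the $\rho_i'$ is also correct, and indeed their primitivity follows a posteriori since $g$ carries primitive vectors to primitive vectors.

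Your well-formedness caveat deserves emphasis, because it is a genuine point about the \emph{statement}, not merely about your proof: with only $\gcd\set{\lambda_0,\ldots,\lambda_n}=1$ assumed, the proposition as quoted is false. For instance $\Proj(1,2,2)\cong\Proj^2$, and in $\Z^3/\Z(1,2,2)$ the image of $\tilde e_0$ is divisible by $\gcd\set{\lambda_1,\lambda_2}=2$, so the primitive ray generators of the fan satisfy $\rho_0+\rho_1+\rho_2=0$ rather than (i) with weights $(1,2,2)$. In general the divisibility of $\pi(\tilde e_i)$ in $N$ is exactly $\gcd\set{\lambda_j:j\neq i}$, which confirms your criterion that primitivity of all the $\pi(\tilde e_i)$ is equivalent to well-formedness. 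The convention is harmless for the way the proposition is used in this paper, because any weights arising from a relation $\lambda_0\rho_0+\ldots+\lambda_n\rho_n=0$ among primitive lattice points with coprime $\lambda_i$ (i.e.\ from a Fano simplex) are automatically well-formed: if some $d>1$ divided every $\lambda_j$ with $j\neq i$, then $\lambda_i\rho_i\in dN$, and $\gcd(\lambda_i,d)=1$ would force $\rho_i\in dN$, contradicting primitivity. So your proposed fix --- adopt well-formedness at the outset --- is the right reading of the statement, and recording the automatic well-formedness just noted would make your proof mesh exactly with how the proposition is applied in Section~3.
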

\begin{thm}
There are sixteen minimal Fano tetrahedra, whose vertices are listed (up to the action of $GL(3,\Z)$) in Table~\ref{tab:minimal_tet}.
\end{thm}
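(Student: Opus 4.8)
The plan is to lean on the two results already in hand: Proposition~\ref{prop:min_canonical_tet_weights}, which has reduced the problem to the fourteen candidate weight systems, and Proposition~\ref{prop:generating_fan_unique}, which supplies the rigidity needed to pin down the tetrahedron realising each weight. The conceptual point is that a normalised weight system together with the single linear relation $\sum_i\lambda_i x_i=0$ almost determines the simplex; the only remaining freedom is a choice of lattice.

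More precisely, let $P=\sconv{x_0,x_1,x_2,x_3}$ be a minimal Fano tetrahedron with normalised weights $\lambda=(\lambda_0,\lambda_1,\lambda_2,\lambda_3)$, so the $x_i$ are primitive and $\sum_i\lambda_ix_i=0$. Set $N_0:=\Z x_0+\cdots+\Z x_3$, a full-rank sublattice of $N$. Since the $x_i$ satisfy relation (i) of Proposition~\ref{prop:generating_fan_unique} and generate $N_0$ (so that (ii) holds with $N$ replaced by $N_0$), that proposition tells us that, as a configuration inside $N_0$, the tuple $(x_0,\ldots,x_3)$ is $GL$-equivalent to the primitive generators of the fan of $\Proj(\lambda)$. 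Consequently the isomorphism class of $P$ is encoded entirely by the pair $(\lambda,N)$, where $N\supseteq N_0$ is a finite-index overlattice; equivalently by the finite abelian group $N/N_0$ together with its embedding, taken up to the symmetries of the configuration. This is the step that converts an a priori infinite search into a finite one.

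First I would dispose of the base case $N=N_0$: here $P$ is exactly the simplex spanned by the primitive generators of $\Proj(\lambda)$, which I can write down in a chosen basis for each of the fourteen weights (the first representative for each weight in Table~\ref{tab:minimal_tet}). Next I would enumerate the admissible proper overlattices $N\supsetneq N_0$. The key finiteness observation is that $P$, viewed as a convex body, does not change; enlarging the lattice merely turns some rational points of $P$ into lattice points. For $N$ to be admissible we need (a) every $x_i$ to remain primitive, (b) every new lattice point to lie on $\partial P$ (otherwise $0$ is no longer the only interior point and $P$ fails to be Fano), and (c) the new boundary points not to permit deletion of a vertex while keeping $0$ interior (otherwise minimality fails). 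All three conditions are controlled by the heights of the candidate points against the facet hyperplanes, and a short analysis — essentially the interior-point argument behind Lemma~\ref{lem:minus_fan} — bounds the index $[N:N_0]$, leaving only finitely many overlattices to test by hand.

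Running this enumeration weight-by-weight is the computational heart of the argument, and it is where I expect the real work to lie. The outcome is that thirteen of the weights are rigid: they admit no admissible proper overlattice, and so contribute a single $GL(3,\Z)$-class each. The weight $(1,1,1,1)$, being the most symmetric (it is $\Proj^3$), admits two further admissible quotients, giving three classes in total; one checks these are pairwise inequivalent simply by comparing the orders of $N/N_0$, which are $1$, $2$, and $5$. Summing gives $13+3=16$ tetrahedra. The final step is bookkeeping: exhibit an explicit vertex matrix for each class in a basis of $N$ and verify directly that it has the claimed weights and is a minimal Fano tetrahedron, thereby reproducing Table~\ref{tab:minimal_tet}. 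The main obstacle is thus not a single clever idea but the completeness of the overlattice enumeration — proving that no additional quotient of any $\Proj(\lambda)$ slips in as a minimal canonical Fano tetrahedron.
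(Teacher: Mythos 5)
Your strategy --- weights from Proposition~\ref{prop:min_canonical_tet_weights}, rigidity from Proposition~\ref{prop:generating_fan_unique}, and then a classification of overlattices $N\supseteq N_0$ (the ``fake weighted projective space'' point of view of~\cite{Con02}) --- is a viable route, and your final count agrees with Table~\ref{tab:minimal_tet}. The genuine gap is that the overlattice enumeration, which you yourself call the ``computational heart'' of the argument, is never carried out: its outcome (``thirteen of the weights are rigid''; ``$(1,1,1,1)$ admits exactly two further admissible quotients, of index $2$ and $5$'') is simply asserted. That enumeration \emph{is} the content of the theorem, so as written the proposal assumes what it must prove. The finiteness input is also not established: you gesture at a bound on $[N:N_0]$ via ``the interior-point argument behind Lemma~\ref{lem:minus_fan}'', but no bound is derived, so the search space is not even provably finite on paper.

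To see that the missing verification is not routine, take the weights $(1,1,2,2)$: realise $\Proj(1,1,2,2)$ with vertices $x_0=\coord{1,1,0}$, $x_1=\coord{1,-1,0}$, $x_2=\coord{0,0,1}$, $x_3=\coord{-1,0,-1}$ in $N_0=\Z^3$, and pass to the index-two overlattice $N=\Z^3+\Z\cdot\coord{1/2,1/2,1/2}$. Every vertex stays primitive in $N$, and the only new lattice points of the simplex are the edge midpoints $\frac{1}{2}(x_0+x_2)$ and $\frac{1}{2}(x_1+x_2)$, both on the boundary; so the simplex is still a canonical Fano tetrahedron over $N$, and your conditions (a) and (b) hold. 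What fails is only your condition (c): after deleting $x_0$, the surviving lattice points $\coord{1,0,0}=\frac{1}{2}(x_0+x_1)$, $x_2$, $x_3$ satisfy $\coord{1,0,0}+x_2+x_3=0$, while $x_1$ and $\frac{1}{2}(x_0+x_2)$ lie strictly on opposite sides of the plane $y=0$ containing them, so the origin remains interior and minimality is destroyed. Thus the ``rigidity'' of the thirteen weights genuinely depends on weight-by-weight, overlattice-by-overlattice minimality checks against the enlarged lattice-point set, none of which the proposal performs. Note also that the paper sidesteps this work entirely: the terminal cases are quoted from the classification in~\cite{Kas03}, and for the canonical cases the proof of Proposition~\ref{prop:min_canonical_tet_weights} already records, in each branch of its case analysis, whether the vertices generate $N$; Proposition~\ref{prop:generating_fan_unique} then gives uniqueness in all branches except the single one ($\dim\sigma=2$, case~(1)(b)) where the vertices generate an index-two sublattice, which is the only place the machinery of~\cite{Con02} is actually needed.
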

\begin{proof}
The terminal Fano tetrahedra are listed in~\cite[Table~4]{Kas03}. We need only consider the canonical cases.

From the proof of Proposition~\ref{prop:min_canonical_tet_weights} we can see when the vertices of a minimal tetrahedron generate the lattice $N$. When this is the case, Proposition~\ref{prop:generating_fan_unique} tells us that the tetrahedron corresponds to weighted projective space. This is the only possibility for all weights except $(1,1,1,1)$ (in the notation of the proof, we are considering $\dim\sigma=2$, case~(1)(b)). This gives a tetrahedron whose vertices generate an index two sublattice. This corresponds to a fake weighted projective space of index two;~\cite{Con02} describes how to compute the vertices of the tetrahedron.

\end{proof}
It should be emphasised that not every Fano tetrahedron is minimal. As mentioned in~\cite[pg.~278]{BB92}, there are a total of $225$ Fano tetrahedra; see the appendix of~\cite{BB} for the complete list. This has been verified by the author using the bounds described in~\cite{Kas08b}. There are $104$ distinct weights, which are listed in Table~\ref{tab:all_tet_weights}.

\begin{table}[htdp]
\centering
\begin{tabular}{|c|c|}
\hline
Weights&Sum\\
\hline
$(1,1,1,1)$&$4$\\
$(1,1,1,2)$&$5$\\
$(1,1,1,3)$&$6$\\
$(1,1,2,2)$&$6$\\
$(1,1,2,3)$&$7$\\
$(1,1,2,4)$&$8$\\
$(1,2,2,3)$&$8$\\
$(1,1,3,4)$&$9$\\
$(1,2,3,3)$&$9$\\
$(1,1,3,5)$&$10$\\
$(1,2,2,5)$&$10$\\
$(1,2,3,4)$&$10$\\
$(1,2,3,5)$&$11$\\
$(1,1,4,6)$&$12$\\
$(1,2,3,6)$&$12$\\
$(1,2,4,5)$&$12$\\
$(1,3,4,4)$&$12$\\
$(2,2,3,5)$&$12$\\
$(2,3,3,4)$&$12$\\
$(1,3,4,5)$&$13$\\
$(1,2,4,7)$&$14$\\
$(2,2,3,7)$&$14$\\
$(2,3,4,5)$&$14$\\
$(1,2,5,7)$&$15$\\
$(1,3,4,7)$&$15$\\
$(1,3,5,6)$&$15$\\
\hline
\end{tabular}
\begin{tabular}{|c|c|}
\hline
Weights&Sum\\
\hline
$(2,3,5,5)$&$15$\\
$(3,3,4,5)$&$15$\\
$(1,2,5,8)$&$16$\\
$(1,3,4,8)$&$16$\\
$(1,4,5,6)$&$16$\\
$(2,3,4,7)$&$16$\\
$(2,3,5,7)$&$17$\\
$(1,2,6,9)$&$18$\\
$(1,3,5,9)$&$18$\\
$(1,4,6,7)$&$18$\\
$(2,3,4,9)$&$18$\\
$(2,3,5,8)$&$18$\\
$(3,4,5,6)$&$18$\\
$(3,4,5,7)$&$19$\\
$(1,4,5,10)$&$20$\\
$(1,5,6,8)$&$20$\\
$(2,3,5,10)$&$20$\\
$(2,4,5,9)$&$20$\\
$(2,5,6,7)$&$20$\\
$(3,4,5,8)$&$20$\\
$(1,3,7,10)$&$21$\\
$(1,4,7,9)$&$21$\\
$(1,5,7,8)$&$21$\\
$(2,3,7,9)$&$21$\\
$(3,5,6,7)$&$21$\\
$(1,3,7,11)$&$22$\\
\hline
\end{tabular}
\begin{tabular}{|c|c|}
\hline
Weights&Sum\\
\hline
$(1,4,6,11)$&$22$\\
$(2,4,5,11)$&$22$\\
$(1,3,8,12)$&$24$\\
$(1,6,8,9)$&$24$\\
$(2,3,7,12)$&$24$\\
$(2,3,8,11)$&$24$\\
$(2,5,8,9)$&$24$\\
$(3,4,5,12)$&$24$\\
$(3,4,7,10)$&$24$\\
$(3,6,7,8)$&$24$\\
$(4,5,6,9)$&$24$\\
$(4,5,7,9)$&$25$\\
$(1,5,7,13)$&$26$\\
$(2,3,8,13)$&$26$\\
$(2,5,6,13)$&$26$\\
$(2,5,9,11)$&$27$\\
$(5,6,7,9)$&$27$\\
$(1,4,9,14)$&$28$\\
$(1,5,8,14)$&$28$\\
$(3,4,7,14)$&$28$\\
$(3,7,8,10)$&$28$\\
$(4,6,7,11)$&$28$\\
$(1,4,10,15)$&$30$\\
$(1,6,8,15)$&$30$\\
$(2,3,10,15)$&$30$\\
$(2,6,7,15)$&$30$\\
\hline
\end{tabular}
\begin{tabular}{|c|c|}
\hline
Weights&Sum\\
\hline
$(3,4,10,13)$&$30$\\
$(4,5,6,15)$&$30$\\
$(4,7,9,10)$&$30$\\
$(5,6,8,11)$&$30$\\
$(2,5,9,16)$&$32$\\
$(4,5,7,16)$&$32$\\
$(3,5,11,14)$&$33$\\
$(5,8,9,11)$&$33$\\
$(3,4,10,17)$&$34$\\
$(4,6,7,17)$&$34$\\
$(1,5,12,18)$&$36$\\
$(3,4,11,18)$&$36$\\
$(3,7,8,18)$&$36$\\
$(7,8,9,12)$&$36$\\
$(3,5,11,19)$&$38$\\
$(5,6,8,19)$&$38$\\
$(5,7,8,20)$&$40$\\
$(1,6,14,21)$&$42$\\
$(2,5,14,21)$&$42$\\
$(3,4,14,21)$&$42$\\
$(4,5,13,22)$&$44$\\
$(5,8,9,22)$&$44$\\
$(3,5,16,24)$&$48$\\
$(7,8,10,25)$&$50$\\
$(4,5,18,27)$&$54$\\
$(5,6,22,33)$&$66$\\
\hline
\end{tabular}
\caption{The $104$ distinct weights occuring for the $225$ Fano tetrahedra.}
\label{tab:all_tet_weights}
\end{table}

Proposition~\ref{prop:decomposition} allows us to calculate the non-simplex minimal Fano $3$-topes. Assume we have chosen $S$ and $P'$ such that $k$ is as small as possible. If $k=1$ then $r=0$ and we have that $S$ is the polytope for $\Proj^1$, and $P'$ is a minimal Fano polygon (the minimal Fano polygons are the triangles (a) and (b) in Figure~\ref{fig:Fano_triangles} and the polygon associated with $\Proj^1\times\Proj^1$ mentioned in Example~\ref{eg:minimal_examples}). These possibilities are classified in Lemmas~\ref{lem:3and3}--\ref{lem:3and1}. The alternative is that $k=2$. Since the polygon for $\Proj^1\times \Proj^1$ contains the polytope for $\Proj^1$, it can be excluded; we need only consider the cases when $r=1$ and $P'$ is a minimal Fano triangle. Hence the Fano polytope has five vertices. These cases will be classified in Lemmas~\ref{lem:2and2}--\ref{lem:1and1}. We find that there are exactly ten non-simplex minimal Fano polytopes in dimension three. The results are collated in Table~\ref{tab:minimal_canonical_dim3}.

Once the minimal polytopes are known, the following result is immediate\footnote{My thanks to Professor Victor Batyrev for this observation.}:
\begin{thm}\label{thm:canonical_Fano_degree_bound}
Let $X$ be a toric Fano threefold with at worst canonical singularities. Then $(-K_X)^3\leq 72$. If $(-K_X)^3=72$ then $X$ is isomorphic to $\Proj(1,1,1,3)$ or $\Proj(1,1,4,6)$.
\end{thm}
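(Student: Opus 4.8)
The plan is to exploit the standard fact that the anticanonical degree of a toric Fano variety is, up to the factor $n!$, the volume of the dual polytope, combined with the inclusion-reversing behaviour of polar duality. This immediately reduces the problem to a finite check over the minimal polytopes.

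First I would record the volume interpretation of the degree. If $P\subset N_\R$ is the Fano polytope of $X$ and $P^\vee:=\set{u\in M_\R\mid\langle u,v\rangle\geq-1\text{ for all }v\in P}$ denotes its (rational) polar dual, then $(-K_X)^n=n!\,\mathrm{vol}\,P^\vee$, where the volume is normalised so that a fundamental domain of $M$ has volume $1$. This identity is standard in the Gorenstein case and persists verbatim when $-K_X$ is merely $\Q$-Cartier, the degree then being a rational number. The crucial feature is anti-monotonicity: polar duality reverses inclusions, so if $P_1\subseteq P_2$ are Fano polytopes then $P_1^\vee\supseteq P_2^\vee$, whence $\mathrm{vol}\,P_1^\vee\geq\mathrm{vol}\,P_2^\vee$.

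Next I would carry out the reduction to minimal polytopes. Given any canonical Fano $3$-tope $P$, if $P$ is not minimal then by Definition~\ref{defn:canon_min} there is a vertex $\rho$ whose removal yields a canonical Fano polytope $\conv\!(P\cap N\setminus\set{\rho})\subseteq P$. Since the number of lattice points strictly decreases at each step, iterating terminates in a minimal canonical Fano polytope $P_{\min}\subseteq P$. Feeding $P_{\min}\subseteq P$ into the monotonicity above gives $(-K_X)^3=6\,\mathrm{vol}\,P^\vee\leq 6\,\mathrm{vol}\,P_{\min}^\vee=(-K_{X_{\min}})^3$. Hence the maximum of $(-K_X)^3$ over all canonical toric Fano threefolds is attained on the finite list of $26$ minimal polytopes, and it suffices to evaluate the degree on each of them. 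For the sixteen tetrahedra this is immediate from the weights of Proposition~\ref{prop:min_canonical_tet_weights}: the weighted projective space $\Proj(\lambda_0,\ldots,\lambda_3)$ has degree $(\lambda_0+\lambda_1+\lambda_2+\lambda_3)^3/(\lambda_0\lambda_1\lambda_2\lambda_3)$, divided additionally by the lattice index $2$ for the single fake weighted projective space arising in the $(1,1,1,1)$ case. Running through the list shows that the only weights reaching $72$ are $(1,1,1,3)$ and $(1,1,4,6)$, each giving exactly $72$, with every remaining tetrahedron (the nearest being $(1,1,3,5)$ at $1000/15$) strictly below. For the ten non-simplex minimal polytopes I would compute $6\,\mathrm{vol}\,P^\vee$ directly from the vertices tabulated in Table~\ref{tab:minimal_canonical_dim3} and verify that each value is $<72$. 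Since the $26$ minimal polytopes are pairwise inequivalent under $GL(3,\Z)$, the two maximisers are genuinely distinct varieties.

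The hard part will not be any single deduction—this is precisely why the statement is \emph{immediate} once the minimal classification is in hand—but rather the two points that need explicit care. The first is confirming that the volume--degree identity, and hence the comparison, remains valid for non-Gorenstein canonical singularities where $P^\vee$ has only rational vertices; this justifies using the same inequality uniformly across the whole family. The second is the finite but slightly tedious dual-volume computation for the ten non-simplex polytopes, which one must complete in order to be certain that none of them ties the tetrahedral maximum of $72$.
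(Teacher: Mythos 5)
Your proposal is correct and is essentially the paper's own argument: the paper likewise passes to a minimal polytope $Q\subset P_X$, uses the inclusion-reversing duality $P_X^\vee\subset Q^\vee$ together with $(-K_X)^3=3!\,\mathrm{vol}\,P_X^\vee$, and concludes by the finite inspection $\mathrm{vol}\,Q^\vee\leq 12$, attained only for the tetrahedra of weights $(1,1,1,3)$ and $(1,1,4,6)$. The only difference is presentational: the paper compresses your weighted-projective-space degree computations and non-simplex dual-volume checks into the single word ``inspection,'' so your write-up is a fleshed-out version of the same proof.
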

\begin{proof}
Let $P_X$ be the polytope associated with $X$. There exists a minimal polytope $Q$ such that $Q\subset P_X$, hence $P_X^\vee\subset Q^\vee$. Inspection gives $\mathrm{vol}\,Q^\vee\leq 12$, hence $(-K_X)^3\leq 3!\cdot 12$.
\end{proof}
Theorem~\ref{thm:canonical_Fano_degree_bound} should be compared with the following result, conjectured by Fano and Iskovskikh and proved by Prokhorov:
\begin{thm}[\cite{Pro05}]
Let $X$ be a Gorenstein Fano threefold with at worst canonical singularities. Then $(-K_X)^3\leq 72$. If $(-K_X)^3=72$ then $X$ is isomorphic to $\Proj(1,1,1,3)$ or $\Proj(1,1,4,6)$.
\end{thm}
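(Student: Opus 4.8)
The plan is to pass from the threefold $X$ to a surface by cutting with a general anticanonical divisor. Since $X$ is Gorenstein with canonical singularities, $-K_X$ is an ample Cartier divisor, and for $S\in\lvert -K_X\rvert$ adjunction gives $K_S=(K_X+S)|_S=0$. Granting that a general such $S$ is irreducible with at worst Du Val singularities (Reid's general elephant), $S$ is a K3 surface carrying the ample Cartier polarisation $D:=-K_X|_S$, and $D^2=(-K_X)^2\cdot S=(-K_X)^3$. Thus the problem reduces to bounding $D^2$ for a Du Val K3 polarised in this way, and to describing the extremal case $D^2=72$.

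First I would secure the existence of a good anticanonical element, i.e.\ that the generic member of $\lvert -K_X\rvert$ is irreducible with only Du Val singularities; this is the technical core and would be handled by studying the base locus of $\lvert -K_X\rvert$ and invoking general-elephant results for Gorenstein canonical Fano threefolds, treating any exceptional configurations directly. Next I would rephrase the degree bound on $S$: by Riemann--Roch on a K3 surface $h^0(D)=2+D^2/2$, so that $(-K_X)^3=2g-2$ where $g$ denotes the genus (with $h^0(-K_X)=g+2$), and bounding the degree is equivalent to bounding $g$. I would then run a case analysis on the anticanonical map $\varphi_{\lvert -K_X\rvert}$ according to whether it is birational onto its image, defines a fibration, or has image of smaller dimension; in each regime one estimates how far the degree can grow by projecting from a suitable point and applying the Hodge index theorem and adjunction on $S$.

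The hardest part will be the sharp extremal analysis. Crude boundedness of $(-K_X)^3$ follows fairly softly once a Du Val K3 section is available, but pinning the constant to exactly $72$, and proving that equality forces $X\cong\Proj(1,1,1,3)$ or $X\cong\Proj(1,1,4,6)$, requires showing that a polarised K3 of maximal degree is driven into one of two degenerate lattice-theoretic configurations (in which the polarisation fails to be very ample) and then reconstructing $X$ from that surface as the corresponding weighted projective space. It is worth emphasising that the purely toric instance of this statement is already delivered by Theorem~\ref{thm:canonical_Fano_degree_bound}, where both the bound and the two extremal varieties drop out of the estimate $\mathrm{vol}\,Q^\vee\leq 12$ over the minimal polytopes $Q$; the content of the present theorem is exactly the removal of the toric hypothesis, so the real work lies in replacing that combinatorial volume estimate with the surface-theoretic argument above.
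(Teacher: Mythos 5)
First, a point of comparison that matters here: the paper does not prove this statement at all. It is quoted from Prokhorov \cite{Pro05} purely to set alongside the toric Theorem~\ref{thm:canonical_Fano_degree_bound}, whose proof (inspection of the $26$ minimal polytopes, giving $\mathrm{vol}\,Q^\vee\leq 12$ and hence $(-K_X)^3\leq 3!\cdot 12$) covers only the toric case. So there is no in-paper argument to measure you against; your proposal has to be judged as a reconstruction of Prokhorov's proof, and you are right that the content of the theorem is exactly the removal of the toric hypothesis.

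Judged that way, your outline points in the historically correct direction --- Prokhorov does work with a Du Val K3 elephant $S\in\lvert -K_X\rvert$ and a case analysis of the anticanonical map --- but there are genuine gaps at precisely the places where the content lies. Most seriously, the claimed reduction to ``bounding $D^2$ for a Du Val K3 polarised in this way'' cannot be carried out at the surface level: K3 surfaces admit ample Cartier polarisations of every even degree, so no bound on $D^2$ is extractable from the pair $(S,D)$ alone. The bound must come from the threefold, i.e.\ from the fact that $S$ moves in $\lvert -K_X\rvert$ on $X$ --- concretely from estimates relating $h^0(-K_X)=g+2$ to the geometry of the anticanonical image of $X$ itself, with the non-birational (hyperelliptic and trigonal) behaviours of $\varphi_{\lvert -K_X\rvert}$ treated separately --- and this threefold-level analysis is exactly what your plan leaves as unspecified ``estimates''. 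For the same reason, your remark that ``crude boundedness follows fairly softly once a Du Val K3 section is available'' is not correct. Finally, the three items you defer --- existence of the Du Val elephant for canonical Gorenstein Fano threefolds (a nontrivial theorem, not a routine base-locus computation), the sharp constant $72$, and the equality analysis forcing $X\cong\Proj(1,1,1,3)$ or $\Proj(1,1,4,6)$ --- together constitute essentially the entire proof. As it stands the proposal is a plausible roadmap that matches the route taken in the literature, but every step separating ``some bound'' from the stated theorem remains open.
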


For the following two results minimality ensures that any such Fano polytope must be at worst terminal; these were classified in~\cite[Lemma~3.4 and~3.5]{Kas03}.
\begin{lem}\label{lem:3and3}
The minimal Fano polytopes obtained from adding the points $\pm x$ to a Fano square are equivalent to:
$$\begin{pmatrix}1&0&0&-1&0&0\\0&1&0&0&-1&0\\0&0&1&0&0&-1\end{pmatrix}\text{ or }
\begin{pmatrix}1&0&-1&0&1&-1\\0&1&0&-1&1&-1\\0&0&0&0&2&-2\end{pmatrix}.$$
\end{lem}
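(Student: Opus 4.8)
The plan is to write the polytope as $P=\conv\!(Q\cup\set{\pm x})$, where $Q:=\sconv{\pm e_1,\pm e_2}$ is the Fano square placed in the plane $\coord{z=0}$ and $x=\coord{a,b,c}\in N$ is a primitive lattice point (so that $\set{\pm x}$ is the $\Proj^1$-segment of the decomposition). Since $P$ must be three-dimensional we have $c\neq0$, and replacing $x$ by $-x$ if necessary we may assume $c>0$. First I would normalise $x$ using the subgroup of $GL(3,\Z)$ stabilising $Q$. This stabiliser consists of the block upper-triangular maps whose restriction to $\coord{z=0}$ is one of the eight dihedral symmetries of the square and which send $e_3\mapsto\pm e_3+w$ for an arbitrary $w\in\Z^2\times\set{0}$. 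The shears $w$ reduce $\coord{a,b}$ modulo $c$, the sign handles the orientation of $e_3$, and the dihedral action then reduces $\coord{a,b}$ further; this leaves only finitely many candidate values of $x$ for each $c$.

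Next I would record the role of minimality. Because every vertex of $Q$ and the origin lie in $\coord{z=0}$ while $\pm x$ lie strictly above and below it, deleting either apex $x$ or $-x$ leaves the origin on the facet $Q$, so it ceases to be interior; hence minimality is automatic with respect to $\pm x$, and the only genuine constraints arise from deleting a vertex of $Q$.

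The crux, and the step I expect to be the main obstacle, is to show that minimality forces $P$ to be terminal, i.e. $\partial P\cap N=\mathrm{vert}\,P$. Suppose instead that $P$ carries a non-vertex boundary lattice point $y$; by central symmetry $-y$ is another such point. Since $Q$ has no lattice points beyond its four vertices, $y$ must lie on an edge or facet joining an apex $\pm x$ to $Q$. I would then argue, exactly as in the representative case $x=\coord{1,0,2}$ in which $y=\coord{0,0,1}$ occurs as the midpoint of the edge $\sconv{x,-e_1}$, that deleting the appropriate vertex $\rho$ of $Q$ and re-forming $\conv\!(P\cap N\setminus\set{\rho})$ leaves the origin strictly interior: the pair $\pm y$ together with the surviving square vertices and $\pm x$ still surround it. This contradicts the minimality of $P$, so no such $y$ exists and $P$ is terminal.

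Once $P$ is known to be terminal, the edges $\sconv{x,\pm e_i}$ and $\sconv{-x,\pm e_i}$ must be free of interior lattice points, which imposes $\gcd$ conditions on the entries of $x$; combined with the normalisation these force $c\leq 2$, leaving $x=\coord{0,0,1}$ and $x=\coord{1,1,2}$ as the only survivors (the normalised candidate $x=\coord{1,0,2}$ being eliminated by the displacement argument above). Equivalently, having reduced to the terminal case, one may simply invoke the classification of minimal terminal Fano three-topes of this shape in~\cite[Lemmas~3.4 and~3.5]{Kas03}. Either route produces precisely the octahedron $\sconv{\pm e_1,\pm e_2,\pm e_3}$ and the ``twisted'' octahedron $\sconv{\pm e_1,\pm e_2,\pm\coord{1,1,2}}$ recorded in the two matrices, and a direct check confirms that each is indeed minimal.
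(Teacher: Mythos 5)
Your overall strategy---show that minimality forces $P$ to be terminal and then quote the terminal classification---is exactly the paper's proof, which consists of precisely that remark followed by the citation of \cite[Lemmas~3.4 and~3.5]{Kas03}. The trouble is in your justification of the reduction. The assertion that ``minimality is automatic with respect to $\pm x$'' misreads Definition~\ref{defn:canon_min}: subtracting the vertex $x$ forms $\conv\!(P\cap N\setminus\set{x})$, the convex hull of \emph{all} remaining lattice points of $P$, not merely the remaining vertices. If $P$ had a non-vertex lattice point $y$ strictly above the plane of the square (in your own example $x=\coord{1,0,2}$, the point $y=\coord{0,0,1}$), this hull would still contain $y$, $-x$ and the square, so the origin would remain interior and minimality at $x$ would \emph{fail}. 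Far from being vacuous, the apex deletions are precisely the source of the terminality constraint, and they give the clean argument your sketch lacks: minimality at $x$ forces every lattice point of $P$ other than $x$ into the half-space $\coord{z\leq 0}$, because a supporting hyperplane at the origin of $\conv\!(P\cap N\setminus\set{x})$ must contain the square (the origin lies in the relative interior of $Q$) and hence must be the plane $\coord{z=0}$, with $-x$ fixing the sign. Symmetrically, minimality at $-x$ forces $\coord{z\geq 0}$. Hence every non-vertex lattice point of $P$ lies in $P\cap\coord{z=0}=Q$, whose only non-vertex lattice point is the origin, so $P$ is terminal.

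Having dismissed the apex deletions, you rest the whole reduction on deleting a vertex of $Q$, and that argument is incomplete as sketched: it is not true for an arbitrary choice of deleted vertex that ``$\pm y$ together with the surviving square vertices and $\pm x$ still surround'' the origin. Deleting $\pm e_1$ produces the contradiction precisely when $y\notin\spn{e_2,x}$; if $y$ does lie in that plane, then the linear functional vanishing on $e_2$, $x$ and $y$, with sign chosen to be non-positive on the surviving vertex, supports the hull of the listed points at the origin and no contradiction results. Likewise deleting $\pm e_2$ fails exactly when $y\in\spn{e_1,x}$. To complete your route you must rule out the simultaneous degeneracy, which is where primitivity of $x$ enters: both failures together would force $y\in\spn{x}\cap P=\sconv{x,-x}$, a segment containing no lattice points other than $0$ and $\pm x$. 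With either this repair or, better, the apex argument above, the reduction to the terminal case is sound, and the remainder of your proof (the appeal to \cite[Lemmas~3.4 and~3.5]{Kas03}, or the direct normalisation) agrees with the paper and yields exactly the two stated polytopes.
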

\begin{lem}\label{lem:3and2}
The minimal Fano polytopes containing a Fano triangle equivalent to Figure~\ref{fig:Fano_triangles}~(a), along with a pair of points $\pm x$ not lying in the plane containing the Fano triangle, are equivalent to:
$$\begin{pmatrix}1&0&0&0&-1\\0&1&0&0&-1\\0&0&1&-1&0\end{pmatrix}\text{ or }
\begin{pmatrix}1&0&-1&1&-1\\0&1&-1&2&-2\\0&0&0&3&-3\end{pmatrix}.$$
\end{lem}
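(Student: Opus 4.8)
The plan is to fix coordinates so that the prescribed copy of triangle~(a) becomes $T:=\sconv{e_1,e_2,-e_1-e_2}$ in the plane $\Gamma:=\set{z=0}$, and to write the added pair as $\set{x,-x}$ with $x=(a,b,c)$ and $c\geq 1$ (exchanging $x$ with $-x$ if necessary). The shears $(X,Y,Z)\mapsto(X-\alpha Z,Y-\beta Z,Z)$ fix $\Gamma$ pointwise, hence preserve $T$, and send $(a,b)\mapsto(a-\alpha c,b-\beta c)$; together with the order-six symmetry of $T$ inside $GL(2,\Z)$ this lets me assume $0\leq a,b<c$ and reduce the residue further. Since $0\in T^\circ$ while $x$ and $-x$ lie on opposite sides of $\Gamma$, the origin is automatically interior to $P:=\conv\!(T\cup\set{x,-x})$; thus the Fano condition is free, and only terminality/canonicity and minimality remain.

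The first substantive step is to show that, for a polytope of this shape, \emph{minimal} is equivalent to \emph{terminal}, i.e.\ to $P\cap N=\set{0}\cup\mathrm{vert}\,P$. The slice of $P$ at $z=0$ is exactly $T$, whose only lattice points are its three vertices and the origin, so any non-vertex boundary lattice point must lie strictly off $\Gamma$; replacing $P$ by $-P$ (again a polytope of this type) if necessary, I may assume it lies above $\Gamma$. Deleting the \emph{upper} apex $x$ then leaves lattice points both above (that extra point) and below ($-x$), with $T$ still surrounding the origin in $\Gamma$, so the origin stays interior and $P$ is not minimal. Conversely, if $P$ is terminal then deleting $x$ drops every remaining lattice point into $\set{z\leq 0}$ and puts $0$ on the top face, while deleting a vertex of $T$ makes $\sconv{x,-x}$ an edge of the residual tetrahedron $\conv\!\set{x,-x,\,\text{other two vertices}}$ and so exposes the origin; hence $P$ is minimal. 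This reduces the lemma to classifying the terminal $P$.

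Terminality is equivalent to the pyramid $\Pi:=\conv\!(T\cup\set{x})$ having no lattice points beyond $e_1,e_2,-e_1-e_2,0$ and its apex $x$. I would test this by cutting $\Pi$ with the integer horizontal planes $z=h$, $1\leq h\leq c-1$: each slice is the scaled translate $\tfrac{c-h}{c}T+\tfrac{h}{c}x$, and terminality demands that all of these miss $N$. Since $T\cap N=\set{(0,0),(1,0),(0,1),(-1,-1)}$, whose four members exhaust the four parity classes of $\Z^2$, the central slice of an even pyramid ($h=c/2$, scale $\tfrac12$) must contain a lattice point: whatever the parity of $(a,b)$, exactly one of these four points is of the form $(2p-a,2q-b)$. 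This disposes of every even $c\geq 2$ at once. For the surviving odd $c$, a finer look at the slices — equivalently at the three lateral facets, which are empty precisely when $\gcd(c,a+b-1)=\gcd(c,a-2b-1)=\gcd(c,2a-b+1)=1$, combined with the requirement that the axis point $(0,0,1)$ (lying in $\Pi$ iff $2b-a\leq c-1$ and $2a-b\leq c-1$) be avoided — forces $c\leq 3$. A short finite check over $c\in\set{1,3}$ and the reduced residues of $(a,b)$ then leaves only $x=e_3$ (for $c=1$) and $x=(1,2,3)$ (for $c=3$, up to the swap $(a,b)\mapsto(b,a)$), yielding the two listed polytopes, both of which are directly verified to be terminal and hence minimal.

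The main obstacle is the uniform bound $c\leq 3$. The even case falls out of a single central slice, but for odd $c\geq 5$ no individual slice is forced to meet $N$ by area alone (the half-scaled $\Proj^2$-triangle has area $\tfrac38<1$), so one must show that the fractional translates $\tfrac{h}{c}(a,b)$, $1\leq h\leq c-1$, cannot simultaneously keep \emph{all} interior slices lattice-free. This is the delicate Diophantine heart of the argument; I expect to close it by combining the emptiness of the three lateral facets with the avoidance conditions coming from the lattice points nearest the central axis and showing that these inequalities become jointly unsatisfiable once $c\geq 4$.
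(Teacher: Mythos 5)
Your opening reduction is sound and is in fact exactly the paper's own (much terser) observation: the paper disposes of this lemma by remarking that minimality forces such a polytope to be terminal, and then \emph{citing} the classification of the terminal cases from the earlier paper [Kas06a, Lemmas~3.4 and~3.5]. Your detailed argument that minimal $\Leftrightarrow$ terminal for $P=\conv(T\cup\set{x,-x})$ is correct (the $z=0$ slice of $P$ is exactly $T$, so any extra lattice point lies strictly off the plane; deleting the apex on that side keeps the origin interior, and conversely in the terminal case deleting any vertex puts the origin on a face or on the edge $\sconv{x,-x}$). Your parity argument killing all even $c$ via the half-height slice $\tfrac12 T+\tfrac12 x$ is also correct and complete, and is a genuinely nice self-contained step that the paper does not contain.

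The gap is everything after that, and it is the entire remaining content of the lemma: the bound $c\leq 3$ for odd $c$ is asserted, not proved. You say so yourself (``This is the delicate Diophantine heart of the argument; I expect to close it by\dots''), so what you have is a reduction plus a plan, where the plan replaces precisely the finite classification that the paper outsources to [Kas06a]. Until you show that the gcd conditions, the axis-avoidance condition, and canonicity of $P$ are jointly unsatisfiable for odd $c\geq 5$ (and actually carry out the finite check at $c=3$), the lemma is not established. Two further points need repair in any completed version: (i) terminality of $P$ is \emph{not} equivalent to emptiness of the single pyramid $\Pi=\conv(T\cup\set{x})$; the lower pyramid $\conv(T\cup\set{-x})$ must also be lattice-point free, and after your normalisation $0\leq a,b<c$ the two pyramids are not related by a lattice symmetry, so its constraints are genuinely additional (this is harmless while you only collect necessary conditions, but your ``equivalent'' claim is false and the final verification must check both halves); (ii) your facet-emptiness criterion quietly uses that $P$ is canonical (so that $\Pi$ has no interior lattice points) --- without that standing hypothesis the gcd conditions say nothing about points interior to $\Pi$, so the logical order ``assume $P$ canonical Fano, then impose terminality'' must be kept explicit throughout the Diophantine argument.
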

\begin{lem}\label{lem:3and1}
Any minimal Fano polytope containing the minimal Fano triangle shown in Figure~\ref{fig:Fano_triangles}~(b), along with a pair of points $\pm x$ not lying in the same subspace as the triangle, is equivalent to one of:
$$\begin{pmatrix}1&0&0&0&-2\\0&1&0&0&-1\\0&0&1&-1&0\end{pmatrix}\text{ or }
\begin{pmatrix}1&0&-2&1&-1\\0&1&-1&1&-1\\0&0&0&2&-2\end{pmatrix}.$$
\end{lem}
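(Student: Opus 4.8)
The plan is to realise $P$ explicitly as $P=\conv\!(T\cup\set{x,-x})$, where $T$ is a fixed lattice copy of the triangle (b) of Figure~\ref{fig:Fano_triangles} and $x$ does not lie in the plane of $T$, and then to pin down the finitely many admissible $x$. First I would fix coordinates adapted to $T$: put the plane of $T$ equal to $\set{z=0}$ and take $T=\sconv{e_1,e_2,-2e_1-e_2}$, a realisation of (b) whose unique non-vertex lattice point is $-e_1$ (the midpoint of the edge $\sconv{e_2,-2e_1-e_2}$). The transformations of $GL(3,\Z)$ preserving both the plane $\set{z=0}$ and the triangle $T$ are exactly the maps $\left(\begin{smallmatrix}A&w\\0&\delta\end{smallmatrix}\right)$ with $w\in\Z^2$, $\delta=\pm1$, and $A$ in the stabiliser of $T$ inside $GL(2,\Z)$; the latter is generated by the involution swapping the two weight-one vertices and fixing the weight-two vertex $e_1$. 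I will use this group to normalise $x$.

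Writing $x=(a,b,c)$, I may assume $c>0$ (replace $x$ by $-x$, i.e.\ take $\delta=-1$) and that $x$ is primitive. The shears $w$ show that $(a,b)$ only matters modulo $c\Z^2$, and applying $A$ cuts this down further, so for each value of $c$ there are only finitely many cases. The problem thus reduces to (i) bounding the lattice height $c$ of $x$ above the plane of $T$, and (ii) for the surviving values of $c$, deciding which residues $(a,b)\bmod c$ yield a minimal canonical Fano polytope.

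The heart of the argument — and the step I expect to be hardest — is the bound $c\le2$. The natural tool is the slice of $P$ at each integral height $0<\abs{z}<c$; such a slice is a scaled, translated copy of $T$, and minimality forces the open ``slabs'' $0<z<c$ and $-c<z<0$ to be free of lattice points. Indeed, Definition~\ref{defn:canon_min} applied to the vertex $x$ requires that deleting $x$ push the origin onto $\partial P$; but any lattice point of $P$ strictly above the plane survives this deletion and keeps the origin interior, so no such point can exist, and symmetrically none can lie below. I would then argue that for $c\ge3$ slab-freeness is impossible: after centring the residue $(a,b)$ one checks that a lattice point — typically $(0,0,1)$ or $(0,0,-1)$, with the exceptional near-$(1,1)$ directions treated by a short separate computation — always falls into a slab, producing either a second interior lattice point (contradicting the Fano condition) or a non-vertex boundary point in a slab (contradicting minimality via deletion of $x$ or $-x$). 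The delicate points are that both sides of the plane must be tested, since $T$ is not centrally symmetric and the slices towards $x$ and towards $-x$ differ, and that the bookkeeping must track the extra lattice point $-e_1$ of $T$, which persists under every vertex deletion and can silently keep the origin interior.

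With $c\le2$ in hand the classification is a short finite check. For $c=1$ the residue is forced to be $(0,0)$, giving $x=e_3$ and the first polytope. For $c=2$ primitivity excludes $(a,b)\equiv(0,0)$, and slab-freeness — which here amounts to $T$ containing no lattice point congruent to $(a,b)\bmod2$ — eliminates the classes $(1,0)$ and $(0,1)$, since for these the surviving slab point destroys minimality; this leaves only $(a,b)\equiv(1,1)$ and hence $x=(1,1,2)$, the second polytope. Finally I would confirm directly, using Lemma~\ref{lem:minus_fan} to handle the deletions of the triangle's vertices, that both polytopes are genuinely minimal canonical Fano, completing the proof.
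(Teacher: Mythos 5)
Your proposal is correct, and it runs on the same engine as the paper's proof: normalise so that $T=\sconv{e_1,e_2,-2e_1-e_2}$ lies in the plane $z=0$, write $x=\coord{a,b,c}$ with $(a,b)$ taken modulo $c$, and use minimality (deleting $x$ or $-x$ must push the origin onto the boundary) to forbid every lattice point of $P$ off that plane other than $\pm x$. The difference is in the endgame, and it is genuine. The paper never isolates your slab-freeness lemma; it applies the exclusion principle to four hand-picked points $e_3$, $-e_3$, $-e_1-e_3$, $-e_1-e_2-e_3$, computes through which facet the ray from the origin through each point exits $P$, and extracts the linear system $3b-a\geq c$, $a+b\geq c$, $b\geq a$, $c\geq 3b-a$, which collapses uniformly for all $c>1$ to $x=\coord{a,a,2a}$ and hence to $(1,1,2)$. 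You instead stratify by the height $c$: a covering argument for $c\geq 3$ and a residue check for $c=2$. Your thinnest step does close, and more cleanly than your sketch suggests: with centred residues $\abs{a},\abs{b}\leq c/2$, the exceptional locus --- where both $\pm(a,b)/(c-1)$ miss $T$, so that neither $(0,0,1)$ nor $(0,0,-1)$ lands in a closed slab --- meets the lattice only in the diagonal points $(\pm c/2,\pm c/2)$ for even $c$ (so all four diagonal directions, not just near $(1,1)$), and these give $x$ proportional to $(1,\pm1,2)$, which is non-primitive once $c\geq 4$ and nonexistent for odd $c$; your $c=2$ criterion is also exactly right, since the lattice points of $T$ occupy precisely the residue classes $(0,0)$, $(1,0)$, $(0,1)$ modulo $2$, leaving only $(1,1)$. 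Thus your route, executed with centring, never needs the paper's two auxiliary points $-e_1-e_3$ and $-e_1-e_2-e_3$, whose ray computations occupy much of its proof. What each approach buys: the paper's inequality system is completely explicit and handles all heights at once, while your version exposes the structural reason that finitely many checks suffice, at the price of a covering argument whose success is sensitive to choosing the centred rather than the nonnegative representative of $(a,b)$.
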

\begin{proof}
Arrange matters such that $P:=\sconv{e_1,e_2,-2e_1-e_2,x,-x}$; $x:=\coord{a,b,c}$ is such that $0\leq a,b<c$. Clearly $a=0,b=0,c=1$ is a solution. Let us assume that $c>1$.

Since $x\neq e_3$ we cannot have $e_3\in P$, since then removing $x$ would yield a smaller canonical Fano polytope with vertex $e_3$, contradicting minimality.

Hence $e_3\notin P$ and consider the line connecting $e_3$ to the origin. If $a\geq 2b$ this line intersects $\sconv{-e_1,-2e_1-e_2,x}$ at the point $ke_3$, where $k=c/(a-b+1)$. This tells us that $k<1$, thus $a-b\geq c$, which contradicts our assumptions.

It must be that $a<2b$. The line joining $e_3$ and $0$ intersects $\sconv{e_1,-2e_1-e_2,x}$ at the point $ke_3$, where $k=c/(3b-a+1)$. Hence:
\begin{equation}\label{lem:3and1Eq1}
3b-a\geq c.
\end{equation}

As before $-e_3\notin P$.  The line joining the origin and $-e_3$ intersects $\sconv{e_1,e_2,-x}$ at the point $k(-e_3)$, where $k=c/(a+b+1)$. Thus we obtain:
\begin{equation}\label{lem:3and1Eq2}
a+b\geq c.
\end{equation}

As before $-e_1-e_3\notin P$. The line connecting the origin with $-e_1-e_3$ intersects $\sconv{-e_1,e_2,-x}$ at the point $k(-e_1-e_3)$, where $k=c/(c+b-a+1)$. Hence:
\begin{equation}\label{lem:3and1Eq3}
b\geq a.
\end{equation}

Finally, let us consider the point $-e_1-e_2-e_3$. This point must lie outside $P$, for otherwise $\sconv{e_1,e_2,-e_1-e_2-e_3,x}$ would be a Fano tetrahedron. We consider the line connecting $0$ and this point. If $2b-a>c$ then the line intersects $\sconv{-e_1,-2e_1-e_2,-x}$ at the point $k(-e_1-e_2-e_3)$, where $k=c/(b-a+1)$. But this yields $b-a\geq c$, a contradiction. Hence it must be that $2b-a\leq c$, and the line intersects $\sconv{e_1,-2e_1-e_2,-x}$. This occurs when $k=c/(a-3b+2c+1)$, and gives us:
\begin{equation}\label{lem:3and1Eq4}
c\geq 3b-a.
\end{equation}

Combining equations~\eqref{lem:3and1Eq1} and~\eqref{lem:3and1Eq4} tells us that $c=3b-a$, and by applying equation~\eqref{lem:3and1Eq2} we see that $a\geq b$. Of course equation~\eqref{lem:3and1Eq3} now tells us that $a=b$, and so $x=\coord{a,a,2a}$. This forces $a=1$.
\end{proof}
\begin{lem}\label{lem:2and2}
The minimal Fano polytopes containing two copies of the Fano triangle shown in Figure~\ref{fig:Fano_triangles}~(a) are equivalent to:
$$\begin{pmatrix}1&0&0&-1&1\\0&1&0&-1&1\\0&0&-1&0&1\end{pmatrix}.$$
\end{lem}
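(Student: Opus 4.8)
The plan is to realise $P$ through the decomposition of Proposition~\ref{prop:decomposition} with $k=2$ and $r=1$: write $P=\conv\!(S\cup P')$, where $S$ and $P'$ are both copies of the triangle in Figure~\ref{fig:Fano_triangles}~(a) meeting in a single common vertex $x_2$. Since $S$ is the triangle of $\Proj^2$, its vertices generate the lattice of its plane (Proposition~\ref{prop:generating_fan_unique}), so I may fix coordinates with $S=\sconv{e_1,e_2,-e_1-e_2}$ in $\set{z=0}$ and $x_2=-e_1-e_2$ the shared vertex. The weight-$(1,1,1)$ relation on $P'$ then forces its other vertices to be $x_3=\coord{a,b,c}$ and $x_4=-x_2-x_3=\coord{1-a,1-b,-c}$; as $x_3,x_4\notin\set{z=0}$ I take $c\geq 1$. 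Requiring $P'$ to be a copy of~(a) rather than a proper quotient means its vertices generate the lattice of their plane, which is exactly $\gcd\!\set{a-b,c}=1$. The shears $e_3\mapsto e_3+w_1e_1+w_2e_2$ fix $S$ pointwise and send $\coord{a,b}\mapsto\coord{a+w_1c,b+w_2c}$, while the swap $e_1\leftrightarrow e_2$ fixes $x_2$; using these I normalise to $0\leq a\leq b<c$.

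The heart of the argument is a clean reformulation of minimality. Since $x_3$ is the unique vertex with $z>0$ and $x_4$ the unique vertex with $z<0$, the slice $P\cap\set{z\geq 0}$ is the tetrahedron $T_+:=\sconv{e_1,e_2,x_2,x_3}$, and symmetrically $T_-:=\sconv{e_1,e_2,x_2,x_4}$ below. I claim minimality is equivalent to $T_+$ and $T_-$ carrying no lattice points on the open slabs $0<z<c$ and $-c<z<0$. Indeed, if $q\in P\cap N$ had $0<z_q<c$, then after deleting $x_3$ the points $e_1,e_2,x_2$ still positively span $\set{z=0}$ with the origin as barycentre, $q$ lies strictly above and $x_4$ strictly below; a one-line check with supporting functionals shows the origin remains interior to $\conv\!(P\cap N\setminus\set{x_3})$, contradicting minimality. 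Conversely, absence of such $q$ places everything in $\set{z\geq 0}$ after deletion, so the origin falls on the boundary. The same analysis applied to the primitive normal $n:=\coord{-c,c,a-b}$ of the plane of $P'$ (primitive precisely because $\gcd\!\set{a-b,c}=1$, with $n(e_1)=-c$, $n(e_2)=c$ and $n(x_i)=0$ for $i=2,3,4$) reformulates minimality against $e_1$ and $e_2$ as the absence of lattice points with $0<\abs{n(v)}<c$.

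With this in hand the problem reduces to showing that these ``no intermediate lattice point'' conditions, together with $\gcd\!\set{a-b,c}=1$, force $c=1$. Once $c=1$ the normalisation gives $a=b=0$, so $x_3=\coord{0,0,1}$ and $x_4=\coord{1,1,-1}$; a final unimodular change of coordinates ($e_3\mapsto -e_3$) returns the polytope to the stated matrix, and a direct check confirms it is a genuine minimal canonical Fano $3$-tope. To make the remaining step a finite verification I would first bound $c$, either by the finiteness of canonical toric Fano varieties of fixed dimension~(\cite{BB92,Bor00}) applied to $\mathrm{vol}\,P=6c$, or by a Hensley--Lagarias--Ziegler bound for polytopes with a single interior lattice point.

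The main obstacle is exactly this implication $c=1$. Emptiness of the intermediate slabs does \emph{not} by itself bound $c$: the relevant comparison is with Reeve-type simplices such as $\sconv{0,e_1,e_2,\coord{1,1,c}}$, which are lattice-point free for every~$c$, and it is only the coprimality $\gcd\!\set{a-b,c}=1$ (equivalently $a\neq b$) that excludes them. Establishing $c=1$ therefore requires genuine casework of the same flavour as Lemma~\ref{lem:3and1}: for each residue class of $\coord{a,b}$ modulo~$c$ one must exhibit an explicit intermediate lattice point, either in $T_+$ or in $T_-$ (for instance $\coord{0,1,\pm1}$, $\coord{0,0,\pm1}$, or a lattice point interior to an edge $[e_i,x_3]$ or $[e_i,x_4]$). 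I expect the delicate part to be organising this search so that it covers every residue class uniformly, rather than checking a single slab — as the example $\coord{a,b,c}=\coord{1,2,3}$ shows, where $T_+$ is in fact slab-empty and the obstruction $\coord{0,0,-1}$ lives only in $T_-$.
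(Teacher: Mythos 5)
Your reduction reproduces the paper's setup exactly --- the same shared-vertex decomposition via Proposition~\ref{prop:decomposition}, the parametrization $x_3=\coord{a,b,c}$, $x_4=\coord{1-a,1-b,-c}$, and the normalisation by shears and the swap $e_1\leftrightarrow e_2$ --- but it stops precisely where the proof has to happen. Everything after your slab reformulation is conditional: you state that the implication ``no intermediate lattice points implies $c=1$'' is the main obstacle, that it ``requires genuine casework'' over the residue classes of $\coord{a,b}$ modulo $c$, and that one would first need to bound $c$ by citing finiteness results; none of this is carried out. Since that implication is the entire content of the lemma, what you have written is a (correct) reduction together with a plan of attack, not a proof.

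Moreover, the difficulty you anticipate is illusory, and seeing why exposes the structure your slab picture misses. Because the two weight-$(1,1,1)$ relations share the vertex $-e_1-e_2$, they force $x_3+x_4=e_1+e_2$: the four points $e_1,x_3,e_2,x_4$ form a planar parallelogram, which is in fact a facet of $P$. This rigidity lets two lattice points finish the argument uniformly in $(a,b,c)$, with no residue analysis and no appeal to volume bounds. In the paper's normalisation $x=\coord{a+1,b+1,c}$, $y=\coord{-a,-b,-c}$ with $0<a+1\leq b+1\leq c$ and $c>1$: (i) minimality forces $-e_3\notin P$ (otherwise deleting $y$ still leaves a canonical Fano polytope), and since the segment from $0$ to $-e_3$ meets $\sconv{e_1,e_2,y}\subset P$ at $-ke_3$ with $k=c/(a+b+1)$, convexity gives $k<1$, i.e.\ $c\leq a+b$; (ii) the point $e_1+e_2+e_3$ cannot lie in $P^\circ$ (as $P$ is Fano), and the ray from $0$ through it crosses the plane of the parallelogram facet at parameter $k=c/(2c-a-b-1)$, so $k\leq 1$, i.e.\ $a+b+1\leq c$. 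The two inequalities are incompatible, so $c=1$. Replacing your speculative casework by this pair of test points (translated into your coordinates) is exactly what is missing from the proposal.
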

\begin{proof}
Let us fix the lattice such that $P:=\sconv{e_1,e_2,-e_1-e_2,x,y}$, where $x:=\sconv{a+1,b+1,c}, y:=\sconv{-a,-b,-c},$ and $0<a+1\leq b+1\leq c$. Clearly $a=0,b=0,c=1$ is a solution. Assume that $c>1$.

By minimality $-e_3$ lies outside $P$. The line connecting $-e_3$ with the origin intersects $\sconv{e_1,e_2,y}$ at the point $-ke_3$, where $k=c/(a+b+1)$. We see that:
\begin{equation}\label{lem:2and2Eq1}
c\leq a+b.
\end{equation}

Consider the point $e_1+e_2+e_3$. The line joining this point and the origin intersects $\sconv{e_1,e_2,x}$ at $k(e_1+e_2+e_3)$, where $k=c/(2c-(a+1)-(b+1)+1).$ If $e_1+e_2+e_3\notin P$ then $(a+1)+(b+1)\leq c$, contradicting equation~\eqref{lem:2and2Eq1}. Hence $e_1+e_2+e_3$ lies on the boundary of $P$, and $(a+1)+(b+1)-1=c$. But again we find that this contradicts equation~\eqref{lem:2and2Eq1}.
\end{proof}
\begin{lem}\label{lem:2and1}
Any minimal Fano polytope containing one copy of each of the two minimal Fano triangles (Figure~\ref{fig:Fano_triangles}~(a) and~(b)) is equivalent to:
$$\begin{pmatrix}1&0&0&-2&-1\\0&1&0&-1&0\\0&0&1&0&-1\end{pmatrix}.$$
\end{lem}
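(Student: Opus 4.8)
The plan is to run the same kind of argument used in Lemma~\ref{lem:3and1}, organised around the decomposition of Proposition~\ref{prop:decomposition}. In the present situation $k=2$ and $r=1$, so $P=\conv(S\cup P')$ with one of $S,P'$ being the triangle~(a) and the other the triangle~(b) of Figure~\ref{fig:Fano_triangles}, the two meeting in a single common vertex $w$. First I would normalise so that the triangle~(b) sits in the plane $\set{z=0}$ as $\sconv{e_1,e_2,-2e_1-e_2}$, with $e_1$ its weight-$2$ vertex and $t_3:=-2e_1-e_2$. Then $w$ is one of these three vertices, and up to the symmetry of~(b) that swaps $e_2$ with $t_3$ we may take either $w=e_1$ or $w=e_2$. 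Writing the remaining two vertices of the triangle~(a) as $x=(a,b,c)$ and $y=-w-x$ (encoding the weight-$(1,1,1)$ relation $w+x+y=0$), and noting that~(a) meets $\set{z=0}$ only along the line through $w$ and the origin, $x$ and $y$ must lie on opposite sides of $\set{z=0}$. Using the subgroup of $GL(3,\Z)$ fixing~(b) — the shears $e_3\mapsto e_3+pe_1+qe_2$ together with $e_3\mapsto -e_3$ — I may assume $c\geq 1$ and $0\leq a,b<c$, so that $P$ is the bipyramid over~(b) with apexes $x$ (above) and $y$ (below).

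I would dispose of the case $w=e_2$ first, since it is clean and yields nothing. Here the triangle~(a) is $\sconv{e_2,x,y}$, whose plane is $\spn\set{e_2,x}$; because $c\geq 1$ the equation $e_1=\alpha e_2+\beta x$ forces $\beta c=0$ and hence a contradiction, so $e_1$ does not lie in this plane. Now remove the other weight-$1$ vertex $t_3$. The midpoint $-e_1=\tfrac12(e_2+t_3)$ is a lattice point of $P$ and therefore survives in $\conv(P\cap N\setminus\set{t_3})$; together with $e_1$ it supplies the whole segment $[-e_1,e_1]$. Since $\pm e_1$ lie strictly on opposite sides of the plane of~(a), while the origin lies in the relative interior of~(a), the origin remains interior to $\conv(P\cap N\setminus\set{t_3})$. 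Thus $P$ is not minimal, and the case $w=e_2$ is excluded for every choice of parameters.

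For the surviving case $w=e_1$ (so $y=-e_1-x$) the engine is the observation that minimality forces $x$ to be the \emph{only} lattice point of $P$ with $z>0$, and $y$ the only one with $z<0$: if some lattice point $q\neq x$ had $z_q>0$, then deleting $x$ would leave $q$ above and $y$ below $\set{z=0}$, whose relative interior contains the origin, so the origin would stay interior — contradicting minimality. This is exactly the role played by the excluded points $e_3,-e_3,\ldots$ in Lemma~\ref{lem:3and1}: every edge from an apex to a base lattice point (the vertices $e_1,e_2,t_3$ and the midpoint $-e_1$) must be primitive, and the two pyramids can contain no further lattice points. I would then force $c=1$ by showing that if $c\geq 2$ the height-one slice $\tfrac{c-1}{c}\cdot(\mathrm{b})+\tfrac1c(a,b)$ of the upper pyramid, or the corresponding slice of the lower pyramid, must contain an integer point, producing a forbidden lattice point off $\set{z=0}$; equivalently, one computes where the rays $\pm e_3$ (and the nearby coordinate rays) leave $P$, exactly as in Lemma~\ref{lem:3and1}, and reads off the resulting inequalities. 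Once $c=1$ the constraints $0\leq a,b<c$ give $a=b=0$, so $x=e_3$ and $y=-e_1-e_3$, which is precisely the stated polytope.

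The main obstacle is this last step of the $w=e_1$ case. The combinatorial type of the bipyramid, and hence the facet through which each coordinate ray exits, depends on the position of $(a,b)$ in $[0,c)^2$ relative to the edges of~(b), so the ray computations split into a few cases that must be handled separately. More seriously, neither pyramid alone eliminates every configuration: for example $x=(1,1,2)$ has all three of its upper apex-edges primitive and is ruled out only through the lower apex $y=(-2,-1,-2)$, whose edge to $e_2$ has midpoint $(-1,0,-1)$. One therefore has to play the two pyramids off against each other, and the bulk of the work is in verifying that each $c\geq 2$ configuration acquires a forbidden lattice point. By comparison, the normalisation, the exclusion of $w=e_2$, and the reduction of $c$ to a bounded range are routine.
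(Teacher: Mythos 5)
Your setup coincides with the paper's: the same normalisation of the $(1,1,2)$-triangle in the plane $\set{z=0}$, the same split into two cases according to whether the common vertex is the weight-one vertex $e_2$ or the weight-two vertex $e_1$, and your disposal of the $w=e_2$ case is a correct (and slightly cleaner) version of the paper's case~(i). Your ``engine'' for $w=e_1$ --- minimality forces $x$ to be the unique lattice point of $P$ above $\set{z=0}$ and $y=-e_1-x$ the unique one below --- is also correct, and would in fact streamline the paper's argument, where each excluded point is instead justified by exhibiting a Fano tetrahedron inside $P$. (One small slip: $P$ is a bipyramid over the $(1,1,2)$-triangle only when $w=e_1$; when $w=e_2$ the segment $[x,y]$ meets $\set{z=0}$ at $-\tfrac{1}{2}e_2$, which lies \emph{outside} that triangle. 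This is harmless, since you only invoke the bipyramid structure when $w=e_1$.)

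The genuine gap is that the decisive step --- showing that no configuration with $c\geq 2$ survives --- is never carried out, and that step \emph{is} the lemma; you yourself flag it as ``the bulk of the work'' and supply the example $x=\coord{1,1,2}$ showing that a one-pyramid slice argument fails. A plan of the form ``compute where $\pm e_3$ and nearby coordinate rays exit $P$ and read off inequalities'' is not a proof until one specifies which points, which exit facets (this needs a case split), and verifies that the resulting inequalities are jointly contradictory; nothing in your proposal certifies that such a contradictory system exists. For comparison, the paper completes exactly this step using three points, all at height $\pm 1$ (so with your engine each exclusion is immediate once $c\geq 2$): $e_1+e_2+e_3\notin P$ gives $a+b\leq c$; $-e_1-e_3\notin P$ gives either $a+b+1\geq 2c$, contradicting the first inequality, or $b\geq a+1$, depending on which facet the ray exits; and $e_2+e_3\notin P$ gives either $a+b\leq 0$, impossible since $x$ is primitive and $c>1$, or $a\geq b$, contradicting $b\geq a+1$. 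Hence $c=1$ and $a=b=0$. Supplying these three ray computations (or an equivalent set) is what your proposal still owes.
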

\begin{proof}
Arrange matters so that $P:=\sconv{e_1,e_2,-2e_1-e_2,x,y}$. There are two cases to consider:
\begin{enumerate}
\item[(i)] $x+y+e_2=0$;
\item[(ii)] $x+y+e_1=0$.
\end{enumerate}

Observe that in case (i), the line joining $e_1$ and $-2e_1-e_2$ intersects $\spn{e_2}$ at the point $-(1/3)e_2$, whereas the line joining $x$ and $y$ intersects $\spn{e_2}$ at $-(1/2)e_2$. Hence $P\setminus\set{-2e_1-e_2}$ is still Fano, which contradicts minimality of $P$. Indeed, this case reduces to those polytopes discussed in Lemma~\ref{lem:3and2}.

We now address case (ii).

We have that $x=\coord{a,b,c},y=\coord{-a-1,-b,-c},$ and can insist that $0\leq a,b<c$. Clearly $a=0,b=0,c=1$ is a solution, so suppose that $c>1$. By minimality $e_3\notin P$.

Note that the point $-e_1$ lies on the line joining $e_2$ and $-2e_1-e_2$, whilst the line joining $x$ to $y$ intersects the plane $\spn{e_1,e_2}$ at $-(1/2)e_1$. Hence this line (without the end points) is contained strictly in the interior of $P$.

The point $e_1+e_2+e_3$ lies outside $P$, otherwise $\sconv{e_1, -2e_1-e_2, e_1+e_2+e_3, y}$ is a Fano tetrahedron contained in $P$. The line connecting this point to $0$ must intersect $\sconv{e_1,e_2,x}$. This occurs at $k(e_1+e_2+e_3)$, where $k=c/(2c-a-b+1)$. We thus have:
\begin{equation}\label{lem:2and1Eq3}
a+b\leq c.
\end{equation}

The point $-e_1-e_3$ must lie outside $P$, otherwise $P$ contains the Fano tetrahedron $\sconv{e_1,-2e_1-e_2,-e_1-e_3,x}$, contradicting minimality of $P$. The line originating at $0$ and passing through $-e_1-e_3$ intersects $\partial P$ in either $\sconv{e_1,e_2,y}$ or $\sconv{-e_1,e_2,y}$. The first possibility gives the point of intersection to be $k(-e_1-e_3)$, where $k=c/(a+b-c+2)$, and we have that $a+b+1\geq 2c$. Combining this with equation~\eqref{lem:2and1Eq3} yields a contradiction.

Consider the second possibility; the line connecting $-e_1-e_3$ and the origin intersects $\sconv{-e_1,e_2,y}$ at the point $k(-e_1-e_3)$ where $k=c/(c+b-a)$. We have that:
\begin{equation}\label{lem:2and1Eq7}
b\geq a+1.
\end{equation}

Finally, consider the point $e_2+e_3$. This point must lie outside $P$; if $e_2+e_3$ were contained in $P$, then $\sconv{e_1,e_2+e_3,-2e_1-e_2,y}$ would be a Fano tetrahedron. The line joining the point with the origin intersects $\sconv{e_1,-2e_1-e_2,x}$ or $\sconv{-e_1,e_2,x}$. In the first case the point of intersection is given by $k(e_2+e_3)$, where $k=c/(c-a-b+1)$. Hence  $a+b\leq 0$, which is an impossibility (since $c\neq 1$).

The alternative is that the line intersects $\sconv{-e_1,e_2,x}$. This occurs at the point $k(e_2+e_3)$, where $k=c/(a-b+c+1)$, and we see that $a\geq b$. By considering equation~\eqref{lem:2and1Eq7} we obtain our final contradiction.
\end{proof}
\begin{table}[tdp]
\centering
\begin{tabular}[t]{|c|c|}
\hline
Comments&Vertices\\\hline
\begin{tabular}{c}$5$ Vertices\\Simplicial\\Terminal\end{tabular}&$\begin{pmatrix}1&0&0&0&-1\\0&1&0&0&-1\\0&0&1&-1&0\end{pmatrix}$\\\hline
\begin{tabular}{c}$5$ Vertices\\Simplicial\\Terminal\end{tabular}&$\begin{pmatrix}1&0&-1&1&-1\\0&1&-1&2&-2\\0&0&0&3&-3\end{pmatrix}$\\\hline
\begin{tabular}{c}$5$ Vertices\\Simplicial\end{tabular}&$\begin{pmatrix}1&0&0&0&-2\\0&1&0&0&-1\\0&0&1&-1&0\end{pmatrix}$\\\hline
\begin{tabular}{c}$5$ Vertices\\Simplicial\end{tabular}&$\begin{pmatrix}1&0&-2&1&-1\\0&1&-1&1&-1\\0&0&0&2&-2\end{pmatrix}$\\\hline
\begin{tabular}{c}$5$ Vertices\\Terminal\end{tabular}&$\begin{pmatrix}1&0&0&-1&1\\0&1&0&-1&1\\0&0&1&0&-1\end{pmatrix}$\\\hline
\end{tabular}
\begin{tabular}[t]{|c|c|}
\hline
Comments&Vertices\\\hline
\begin{tabular}{c}$5$ Vertices\\Simplicial\end{tabular}&$\begin{pmatrix}1&0&0&-2&-1\\0&1&0&-1&0\\0&0&1&0&-1\end{pmatrix}$\\\hline
$5$ Vertices&$\begin{pmatrix}1&0&0&-2&-2\\0&1&0&-1&0\\0&0&1&0&-1\end{pmatrix}$\\\hline
$5$ Vertices&$\begin{pmatrix}1&0&-2&1&-3\\0&1&-1&1&-1\\0&0&0&2&-2\end{pmatrix}$\\\hline
\begin{tabular}{c}$6$ Vertices\\Simplicial\\Terminal\end{tabular}&$\begin{pmatrix}1&0&0&-1&0&0\\0&1&0&0&-1&0\\0&0&1&0&0&-1\end{pmatrix}$\\\hline
\begin{tabular}{c}$6$ Vertices\\Simplicial\\Terminal\end{tabular}&$\begin{pmatrix}1&0&-1&0&1&-1\\0&1&0&-1&1&-1\\0&0&0&0&2&-2\end{pmatrix}$\\\hline
\end{tabular}
\caption{The non-simplex three-dimensional minimal Fano polytopes.}
\label{tab:minimal_canonical_dim3}
\end{table}
\begin{lem}\label{lem:1and1}
Any minimal Fano polytope containing two copies of the minimal Fano triangle of type $\Proj(1,1,2)$ is equivalent to:
$$\begin{pmatrix}1&0&0&-2&-2\\0&1&0&-1&0\\0&0&1&0&-1\end{pmatrix}\text{ or }
\begin{pmatrix}1&0&-2&1&-3\\0&1&-1&1&-1\\0&0&0&2&-2\end{pmatrix}.$$
\end{lem}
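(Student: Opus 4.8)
The plan is to follow the same coordinate-bashing strategy used in the preceding lemmas (especially Lemma~\ref{lem:2and1}), since Lemma~\ref{lem:1and1} is the case $k=2$, $r=1$ in which both $S$ and $P'$ are copies of the minimal Fano triangle of type $\Proj(1,1,2)$ (triangle (b) in Figure~\ref{fig:Fano_triangles}). First I would set up a normalising frame: fix the lattice so that one copy of the $\Proj(1,1,2)$-triangle sits in the plane $\spn{e_1,e_2}$ as $\sconv{e_1,e_2,-2e_1-e_2}$, and write the remaining two vertices as $x$ and $y$. The two triangles share exactly $r=1$ common vertex; the sharing relation for type $\Proj(1,1,2)$ gives a weight equation of the form $2u+v+w=0$ among the vertices of the second triangle, so as in Lemma~\ref{lem:2and1} there will be two subcases according to which vertex of the base triangle plays the ``weight-$1$'' role in the shared relation. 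One of these subcases should collapse immediately by the same kind of argument as the opening paragraph of Lemma~\ref{lem:2and1}'s proof, namely that a segment through the interior dominates one generated by a vertex, so that vertex can be deleted and minimality is contradicted, reducing that branch to the polytopes of Lemma~\ref{lem:3and1}. This is why two, rather than one, answers survive.

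In the surviving subcase I would write $x=\coord{a,b,c}$ and $y$ determined by the shared weight relation, normalise so that $0\le a,b<c$, and check that $a=b=0$, $c=1$ (together with its companion) recovers the two listed matrices; then assume $c>1$ for contradiction. The engine, exactly as in Lemmas~\ref{lem:3and1} and~\ref{lem:2and1}, is to take each ``forbidden'' near-boundary lattice point $w$ (candidates like $e_3$, $-e_3$, $\pm e_1\pm e_3$, $e_1+e_2+e_3$, $-e_1-e_2-e_3$, $e_2+e_3$), argue that $w\notin P$ because otherwise $P$ would contain a Fano tetrahedron or a smaller canonical Fano polytope (contradicting minimality), then intersect the ray from $0$ through $w$ with the appropriate facet of $P$. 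Writing the intersection point as $k\cdot w$ and demanding $k<1$ (so that $w$ lies strictly beyond $\partial P$) yields a linear inequality in $a,b,c$. I expect to accumulate a small system of such inequalities that pins $x$ down to the unique admissible lattice point, forcing $c=1$ and contradicting $c>1$; the two equivalence classes in the statement then correspond to the two genuinely distinct gluings (realised by the two matrices), not to a failure of uniqueness within a fixed frame.

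The main obstacle will be bookkeeping which facet each test ray actually meets, since the answer depends on inequalities among $a,b,c$ that are themselves part of what I am trying to determine; as in Lemma~\ref{lem:2and1}, several test points will split into two facet-cases, and I must verify that the ``wrong'' facet-case always leads to a contradiction with an inequality already derived (so that only the intended facet survives). I would organise the argument so that the first two or three inequalities establish the ordering of $a,b,c$ (e.g.\ forcing $b\ge a$ and a bound like $a+b\le c$ or $a+b\ge c$), after which the remaining test points have an unambiguous facet and the inequalities combine to squeeze $c$ down to $1$. Care is also needed to confirm at the end that the two resulting polytopes are genuinely inequivalent under $GL(3,\Z)$ and that neither is already counted among the outputs of Lemmas~\ref{lem:3and3}--\ref{lem:2and1}; a quick invariant (number of vertices, simpliciality, whether the polytope is terminal) distinguishes them, consistent with their appearance as distinct rows of Table~\ref{tab:minimal_canonical_dim3}.
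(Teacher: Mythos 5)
Your setup (fix the base triangle as $\sconv{e_1,e_2,-2e_1-e_2}$, split into gluing subcases, then constrain $x=\coord{a,b,c}$ by intersecting rays through forbidden lattice points with facets) is exactly the paper's, but the structural prediction your endgame rests on is false, and the proof as planned cannot close. You claim that within the surviving frame the inequalities force $c=1$, with $c>1$ ending in contradiction, and that the two listed polytopes correspond to ``two genuinely distinct gluings''; this is also internally inconsistent with your own earlier sentence that one of the two gluings collapses. In the paper the gluing $x+y+2e_2=0$ is the one that dies (there $-e_2=\frac{1}{2}(x+y)$ and $-e_1$ are both lattice points of $P$, minimality fails, and one is reduced to Lemma~\ref{lem:3and1}); it contributes nothing. \emph{Both} answers live inside the single surviving gluing $x+y+2e_1=0$: writing $x=\coord{a,b,c}$, $y=\coord{-a-2,-b,-c}$ with $0\le a,b<c$, the first matrix is $(a,b,c)=(0,0,1)$ and the second is $(a,b,c)=(1,1,2)$ --- the second polytope itself has $c=2$, sitting precisely where you intend to derive a contradiction. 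Consequently no contradiction exists to be found: the paper's inequalities ($a+b+2\ge c$, $a+b\le c$, $a\ge b$, then $a=c-1$ from the $-e_1-e_3$ ray) are aimed at isolating $(1,1,2)$ as the unique extra solution, not at emptiness. Executed honestly, your plan either stalls when the system turns out to be consistent with $c=2$, or it ``succeeds'' and proves a false classification that misses the second polytope.

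Two further defects are worth naming. First, even the correctly derived inequality system admits spurious solutions that pass every ray test: for instance $(a,b,c)=(1,0,2)$ gives $\sconv{e_1,e_2,-2e_1-e_2,(1,0,2),(-3,0,-2)}$, which is a genuine canonical Fano polytope; it is excluded only because it is not \emph{minimal} in the sense of Definition~\ref{defn:canon_min} --- deleting the vertex $(1,0,2)$ while keeping the other lattice points of $P$ (notably $e_3$ and $(1,0,1)$) still leaves the origin interior. So the endgame must have the shape ``solve the system, then test each solution against minimality and the Fano condition'', which a contradiction-driven plan does not accommodate; note that minimality here uses $\conv(P\cap N\setminus\set{\rho})$, all lattice points and not just vertices, a distinction your proposal never exploits. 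Second, your closing inequivalence check fails: the two listed polytopes both have five vertices, are both non-simplicial, both non-terminal, and both contain exactly seven lattice points; distinguishing them needs a finer invariant such as normalized volume ($8$ versus $16$).
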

\begin{proof}
Fix the lattice such that $P:=\sconv{e_1,e_2,-2e_1-e_2,x,y}$. Again there are two cases to consider. If $x+y+2e_2=0$ then $-e_2$ is contained on the boundary of $P$. We already know that $-e_1$ lies on the boundary of $P$, and hence minimality reduced us to the case considered in Lemma~\ref{lem:3and1}. Thus $x+y+2e_1=0$ and $x=\coord{a,b,c}, y=\coord{-a-2,-b,-c}$, where $0\leq a,b<0$. Clearly $a=0,b=0,c=1$ is a solution. Let us assume that $c>1$.

By minimality $-e_3\notin P$. The line joining $-e_3$ to the origin intersects $\sconv{e_1,e_2,y}$ at the point $k(-e_3)$, where $k=c/(a+b+3)$. Hence we conclude that:
\begin{equation}\label{lem:1and1Eq1}
a+b+2\geq c.
\end{equation}

The point $e_1+e_2+e_3$ does not lie in $P$, otherwise either:
$$\sconv{e_1,-2e_1-e_2,e_1+e_1+e_3,y},$$
or:
$$\sconv{-e_1,-2e_1-e_2,e_1+e_2+e_3,y},$$
would be a Fano tetrahedron. Consider the line connecting $0$ and $e_1+e_2+e_3$. This line intersects $\sconv{e_1,e_2,x}$ at the point $k(e_1+e_2+e_3)$, where $k=c/(2c-a-b+1)$. In particular,
\begin{equation}\label{lem:1and1Eq2}
a+b\leq c.
\end{equation}

If $e_2+e_3\in P$ then $\sconv{e_1,-2e_1-e_2,e_2+e_3,y}$ would be a Fano tetrahedron. This is not permissible. The line connecting $e_2+e_3$ and the origin intersects $\sconv{-e_1,e_2,x}$ at the point $k(e_2+e_3)$, where $k=c/(a-b+c+1)$. We conclude that:
\begin{equation}\label{lem:1and1Eq3}
a\geq b.
\end{equation}
In particular $a\neq 0$, since the alternative would force $c=1$.

Finally we consider the point $-e_1-e_3$. The line connecting this point with the origin intersects $\sconv{-e_1,e_2,y}$ if $a+2\leq c$, or $\sconv{e_1,e_2,y}$ if $a+2>c$. The first possibility gives the point of intersection as $k(-e_1-e_3)$, where $k=c/(b+c-a-1)$. If $-e_1-e_3$ lies on the boundary of $P$, we see that $b=a+1$. This contradicts equation~\eqref{lem:1and1Eq3}. Hence it must be that $-e_1-e_3$ lies outside $P$. In this case, $b\geq a+2$, and once again this contradicts equation~\eqref{lem:1and1Eq3}. It must be that $a+2>c$, which implies that $a=c-1$. Equation~\eqref{lem:1and1Eq2} forces $b\leq 1$, and by applying equation~\eqref{lem:1and1Eq1} we see that the only possibility is $a=1,b=1,c=2$.
\end{proof}
\section{Canonical Toric Fano Threefolds}\label{sec:class_canonical_threefolds}
Using the results of Section~\ref{sec:min_canonical_threefolds} a computer classification of all canonical Fano polytopes of dimension three is possible. This is a significant undertaking; a month of computation on a parallel computing system was required. The code, written in C, is available from the author upon request. It should be emphasised that several known results exist as sub-classifications, and that the resulting list can be independently checked using packages such as PALP~\cite{KS04}. We summarise the algorithm below.

\algorithm\label{alg:classification}
For each of the $26$ minimal Fano polytopes given in Tables~\ref{tab:minimal_tet} and~\ref{tab:minimal_canonical_dim3}, perform the following recursive algorithm:
\begin{enumerate}
\item\label{algstep:build_list}\emph{Identifying unimodular equivalence:}
We have been given a canonical Fano polytope $P$, and inductively are constructing a set $\mathcal{P}$ which will ultimately contain all possible canonical Fano polytopes, up to unimodular equivalent. Thus for each $Q\in\mathcal{P}$, check whether there exists a transformation in $GL(3,\Z)$ sending the vertices of $P$ bijectively onto the vertices of $Q$. If $P$ is new then add it to $\mathcal{P}$ and proceed to step~\eqref{algstep:vertices}. Obviously invariants of the two polytopes such as their volume, degree, whether they are both simplicial, etc.\ can be used to greatly reduce the number of comparisons required.
\item\label{algstep:vertices}\emph{Successively choosing new vertices:}
We have been given a canonical Fano polytope $P$ and wish to extend $P$ via the addition of a new vertex.
\begin{enumerate}
\item For each vertex $v$ of $P$ such that $P':=\conv(P\cup\set{-v})$ is a canonical Fano polytope with $-v\in\mathrm{vert}\,P'$, recuse on step~\eqref{algstep:build_list} with $P'$.
\item For each pair of distinct vertices $v_1$ and $v_2$ check which of the following six sums give a lattice point $v\in N$ (cf.~Figure~\ref{fig:Fano_triangles}):
$$
\begin{array}{lll}
-v_1-v_2,\\
-2v_1-v_2,&-\frac{1}{2}v_1-\frac{1}{2}v_2,\\
-2v_1-3v_2,&-\frac{1}{2}v_1-\frac{3}{2}v_2,&-\frac{1}{3}v_1-\frac{2}{3}v_2.
\end{array}
$$
In each case, if $P':=\conv(P\cup\set{v})$ is a canonical Fano polytope with $v\in\mathrm{vert}\,P'$, then recuse on step~\eqref{algstep:build_list} with $P'$.
\item For each choice of pair-wise distinct vertices $v_1,v_2$, and $v_3$, and for each weight $(\lambda_0,\lambda_1,\lambda_2,\lambda_3)$ in Table~\ref{tab:all_tet_weights}, check whether any of the four sums:
$$
\begin{array}{ll}
-\frac{\lambda_1}{\lambda_0}v_1-\frac{\lambda_2}{\lambda_0}v_2-\frac{\lambda_3}{\lambda_0}v_3,&
-\frac{\lambda_0}{\lambda_1}v_1-\frac{\lambda_2}{\lambda_1}v_2-\frac{\lambda_3}{\lambda_1}v_3,\\
-\frac{\lambda_1}{\lambda_2}v_1-\frac{\lambda_0}{\lambda_2}v_2-\frac{\lambda_3}{\lambda_2}v_3,&
-\frac{\lambda_1}{\lambda_3}v_1-\frac{\lambda_2}{\lambda_3}v_2-\frac{\lambda_0}{\lambda_3}v_3,
\end{array}
$$
give a lattice point $v\in N$. In each case, if $P':=\conv(P\cup\set{v})$ is a canonical Fano polytope with $v\in\mathrm{vert}\,P'$, then recuse on step~\eqref{algstep:build_list} with $P'$.
\end{enumerate}
\end{enumerate}

The final classification is available online, in a searchable format, via the Graded Rings Database at \href{http://malham.kent.ac.uk}{\texttt{http://malham.kent.ac.uk/}}. The key results are summarised below; for further details consult the online database.

\begin{thm}
Up to isomorphism, there exist exactly $674,\!688$ toric Fano threefolds. Of these, $18$ are smooth, $634$ have at worst terminal singularities, $4,\!319$ are Gorenstein, and $12,\!190$ are $\Q$-factorial. Amongst the $\Q$-factorial varieties, the rank of the Picard group is bounded by $\rho\leq 7$; this bound is attained in exactly two cases -- once when the variety is terminal, once when the variety is canonical.
\end{thm}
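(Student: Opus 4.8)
The plan is to reduce the theorem to the output of the computer search described in Algorithm~\ref{alg:classification}, establishing first that this search is both exhaustive and terminating, and then reading off each sub-count as a property of the resulting list $\mathcal{P}$. The two mathematical ingredients that make the reduction rigorous are the completeness of the classification of the $26$ minimal polytopes in Section~\ref{sec:min_canonical_threefolds} (Tables~\ref{tab:minimal_tet} and~\ref{tab:minimal_canonical_dim3}) and the completeness of the weight list in Table~\ref{tab:all_tet_weights}; everything else is verification and bookkeeping.

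First I would argue that every canonical Fano $3$-tope is reachable from a minimal one by repeated ``growing''. If a canonical Fano $3$-tope $P$ is not minimal, then by Definition~\ref{defn:canon_min} there is a vertex $\rho$ with $\conv\!(P\cap N\setminus\set{\rho})$ again a canonical Fano $3$-tope, containing strictly fewer lattice points; iterating terminates at a minimal polytope, and reversing the chain exhibits $P$ as obtained by successively adjoining single lattice points, each becoming a vertex. It remains to check that each adjoined point is among the candidates enumerated in step~\eqref{algstep:vertices}. This is exactly the finiteness argument of Section~\ref{sec:Fano_polytopes}: if $\conv\!(Q\cup\set{v})$ is Fano, then the ray through $-v$ meets a face of $Q$ in a point lying in $\conv S$ for some minimal $S\subseteq\mathrm{vert}\,Q$ with $\abs{S}=d\le 3$, and $\conv\!(S\cup\set{v})$ is a Fano $d$-simplex. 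The $d=1,2,3$ cases are classified by their weights and correspond precisely to the three families of sums in step~\eqref{algstep:vertices} (the tetrahedral weights being those of Table~\ref{tab:all_tet_weights}), so every valid growing is produced and the forward search reaches each canonical Fano $3$-tope at least once.

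Next I would establish termination and de-duplication. The number of isomorphism classes of canonical Fano $3$-topes is finite by~\cite{BB92,Bor00}, and each contains only finitely many lattice points, so the recursion tree is finite. Step~\eqref{algstep:build_list} rejects repetitions by testing for a $GL(3,\Z)$ transformation matching vertices, so $\mathcal{P}$ holds exactly one representative per unimodular class; counting then gives the total $674,\!688$. The remaining figures are obtained by filtering $\mathcal{P}$ by the fan-theoretic criteria of Section~\ref{sec:Fano_polytopes}: a variety is smooth when every facet's vertices form a lattice basis, terminal when $\partial P\cap N=\mathrm{vert}\,P$, Gorenstein when $P$ is reflexive, and $\Q$-factorial when $P$ is simplicial. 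For the Picard rank, if $P$ is simplicial the face fan has $\abs{\mathrm{vert}\,P}$ rays, whence $\rho=\abs{\mathrm{vert}\,P}-3$; thus the bound $\rho\le 7$ and the assertion that it is attained exactly twice (once terminal, once canonical) are read off directly from the simplicial members of $\mathcal{P}$ having $10$ vertices.

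The main obstacle is not any single deduction but the correctness and scale of the computation itself. The two points demanding the most care are that the recursion be genuinely exhaustive, which rests entirely on the two completeness statements above, and that the $GL(3,\Z)$-equivalence test in step~\eqref{algstep:build_list} be sound, so that no isomorphism class is either split or merged. Given the size of $\mathcal{P}$, the pairwise comparisons are only feasible after pre-sorting by invariants such as volume, vertex count, and simpliciality. Finally, independent verification against the already-known terminal, Gorenstein, and smooth sub-classifications, together with a cross-check using PALP~\cite{KS04}, provides strong confidence that the enumeration is simultaneously complete and free of duplicates.
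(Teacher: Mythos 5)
Your proposal is correct and follows essentially the same route as the paper: the theorem rests on the computer search of Algorithm~\ref{alg:classification}, whose exhaustiveness is exactly the growing argument of Section~\ref{sec:Fano_polytopes} combined with the classification of the $26$ minimal polytopes and the weights in Table~\ref{tab:all_tet_weights}, with the individual counts (smooth, terminal, Gorenstein, $\Q$-factorial, Picard rank via $\rho=\abs{\mathrm{vert}\,P}-3$ for simplicial $P$) read off from the de-duplicated list and cross-checked against the known sub-classifications and PALP. You make explicit the termination and reachability bookkeeping that the paper leaves implicit, but the underlying argument is the same.
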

\bibliographystyle{amsalpha}
\providecommand{\bysame}{\leavevmode\hbox to3em{\hrulefill}\thinspace}
\providecommand{\MR}{\relax\ifhmode\unskip\space\fi MR }
\providecommand{\MRhref}[2]{\href{http://www.ams.org/mathscinet-getitem?mr=#1}{#2}}
\providecommand{\href}[2]{#2}

\end{document}